\theoremstyle{plain}
\newtheorem{thm}{Theorem}[section]
\theoremstyle{plain}
\newtheorem{lem}[thm]{Lemma}
\theoremstyle{definition}
\newtheorem{mydef}[thm]{Definition}
\theoremstyle{remark}
\newtheorem{rem}[thm]{Remark} %Ok so I don't use it in this file but
\DeclareMathOperator{\supp}{supp}
\DeclareMathOperator{\Imag}{Im}
\renewcommand{\Im}{\Imag}
\renewcommand{\Re}{\text{Re\,}}
\newcommand{\N}{\mathbb{N}} %Natural Numbers
\newcommand{\Z}{\mathbb{Z}} %Natural Numbers
\newcommand{\R}{\mathbb{R}} %Real Numbers
\newcommand{\C}{\mathbb{C}} %Complex Numbers
\newcommand{\grad}{\nabla}
\newcommand{\del}{\partial}
\newcommand{\XuDu}{\ensuremath{x, t, u, \bar{u}, \grad u, \grad \bar u}}
\newcommand{\Xu}{\ensuremath{x, t, u, \bar{u}}}
\newcommand{\Xv}{\ensuremath{x, t, v, \bar{v}}}
\newcommand{\abs}[1]{\ensuremath{\left|#1\right|}}
\newcommand{\ip}[2]{\ensuremath{\left\langle #1, #2 \right\rangle}}
\newcommand{\jap}[1]{\ensuremath{\langle#1\rangle}}
\newcommand{\lp}{\ensuremath{\left(}}
\newcommand{\rp}{\ensuremath{\right)}}%
\newcommand{\paren}[1]{\ensuremath{\lp #1 \rp}}
\newcommand{\norm}[1]{\ensuremath{\left\| #1 \right\|}}
\newcommand{\tnorm}[1]{\ensuremath{||| #1 |||}}
\newcommand{\linref}[1]{\ref{#1}}
\newcommand{\nlinref}[1]{\ref{#1}}
\newcommand{\diff}[2]{\ensuremath{\frac{\del #1}{\del #2}}}
\begin{document}
\title{Quasilinear Schr\"odinger Equations}
\author{Nicholas P. Michalowski}
%\date{June 2008}

\maketitle

\begin{abstract}
  In this paper we prove local well-posedness for Quasi-linear
  Scrh\"odinger equations with initial data in unweighted Sobolev
  Spaces. For small initial data with minimal smoothness this has
  addressed by J.~Marzuola, J.~Metcalfe and D.~Tataru \cite{JMJMDT2012},
  \cite{JMJMDT2012-2}.  This work does not attempt to address the
  minimal regularity for initial data, but instead builds on the
  previous results of C.~Kenig, G.~Ponce, and L.~Vega
  \cite{CKGPLV1998}, \cite{CKGPLV2004}
  to remove the smallness condition in unweighted spaces. 
  This is accomplished by developing a non-centered version of Doi's
  Lemma, which allows one to prove Kato type smoothing estimates.
  These estimates make it possible to achieve the necessary a priori
  linear results.  
\end{abstract}

\section{Introduction} We are interested in the local solvability
of the IVP
\begin{equation}\label{theproblem}
\left\{\begin{aligned}
&\del_t u = ia_{jk}(\XuDu)\del_{x_j}\del_{x_k}u +
\vec b_1(\XuDu)\cdot\grad u\\
&\qquad + \vec{b}_2(\XuDu)\cdot\grad \bar u +c_1(\Xu)u +
c_2(\Xu)\bar u +f(x,t)\\
&u(x,0)=u_0(x).
\end{aligned}\right.
\end{equation}

Quasi-linear Schr\"odinger equations have been studied extensively in
recent years.  The aim of the current work is to extend some results of
C.~Kenig, G.~Ponce, and L.~Vega in \cite{CKGPLV2004}.  In
particular we aimed to remove the assumption that the initial data
$\jap{x}^2\del_x^\alpha u_0\in L^2$ for suitable $\alpha.$

As pointed out in \cite{CKGPLV2004}, other forms of these equations
have been extensively studied.  In \cite{CKGPLV1993}, the same authors
show that the equation
\begin{equation}\label{constcoeffNLS}
\del_t u +i\scr{L}u=P(u,\bar u, \grad u, \grad \bar u)\end{equation}
 with \[\scr{L}=\sum_{i=1}^k\del_{x_i}-\sum_{i=k-1}^n\del_{x_k}\]
and $P(\cdot)$ a non-linearity, is locally well posed for small
initial data in $H^s.$  The smallness condition was first removed in
$n=1$ by N.~Hayashi and T.~Ozawa in \cite{NHTO1994}.  After a change
of variables they were able to write the equation as an equivalent
system that did not involve first order terms in $u$.  For this system
can be handled by the energy method.

For the case elliptic case when $\scr{L}=\Delta$, H.~Chihara
\cite{HC1995} was able to remove the smallness condition in all
dimensions.  Again, the main idea here was to use a transformation
which eliminates the first order terms in $u$ so that the energy
method applies.  For the change of variables to cancel the first
order terms it was necessary to first diagonalize the system for
$(u,\bar u).$  In order to diagonalize the system, as we will see
below, the ellipticity of $\scr{L}$ is essential.

In \cite{CKGPLV1998}, Kenig et.\ al.\ removed the smallness condition
in all dimensions.  They construct a pseudo-differential operator $C$
so that $\overline{Cv}=C\overline{v}$, and because of this they are
able to avoid the diagonalization argument needed in \cite{HC1995}.
The construction of $C$ produces a symbol in the
Calder\'on-Vaillancourt class.

As one moves to variable coefficient second order terms it becomes
necessary to introduce non-trapping conditions on the coefficients.
Consider the equation
\begin{equation}
  \left\{
    \begin{aligned}
      &\del_tu=i\del_{x_k}a_{kj}(x)\del_{x_j}u +
          \vec b_1(x)\cdot \grad u +f\\
      &u|_{t=0}=u_0
    \end{aligned}
\right.
\end{equation}

where $a_{kj}$ elliptic and asymptotically flat. Ichinoise
\cite{WI1984} show that
\[\sup_{\stackrel{\stackrel{x_0\in\R^n}{\xi_0\in S^{n-1}}}{t_0\in\R}}
\abs{\int_0^{t_0}\Im \vec b_1(X(s,x_0,\xi_0))\cdot\Xi(s,x_0,\xi_0)\,ds}\]
 is a necessary condition for the
 estimate \[\sup_{0<t<T}\norm{u}_{L^2}\leq C_T\paren{\norm{u_0}_{L^2}+
\norm{f}_{L^1_T L^2_x}}.\]

The non-trapping assumption  is closely related to local smoothing
estimates, which are key to the linear theory.  This can be seen from
the work of S. Doi (\cite{SD1994},\cite{SD1996}), Craig et.\
al.\ \cite{WCTKWS1995}, and others.  From their work it can be seen
that, under appropriate smoothness, ellipticity and asymptotic
flatness assumptions, the non-trapping condition for
\begin{equation}
  \left\{
    \begin{aligned}
      &\del_tu=i\del_{x_k}a_{kj}(x)\del_{x_j}u \\
      & u|_{t=0}=u_0
    \end{aligned}\right.
\end{equation}
verify local smoothing estimates.  That is, estimates of the form\\
$\norm{J^{1/2}u}_{L^2(\R^n\times[0,T],\jap{x}^m\,dxdt)}\leq
C_T\norm{u_0}_{L^2}$.
In addition, Doi \cite{SD2000} also showed that, under the same
conditions, if the above estimate holds then the non-trapping
assumption must hold.

C.\ Kenig et al in (\cite{CKGPCRLV2005}, \cite{CKGPCRLV2006}) have
extended the results of their previous work in the variable
coefficient case by removing ellipticity assumptions.  Their work
assumes that the initial data is in a weighted Sobolev space.  It will
be the subject of future work to extend the methods here to remove the
weights in this cases.

Recently, in both the elliptic and hyberbolic settings J.~Marzuola,
J.~Metcalfe and D.~Tataru \cite{JMJMDT2012-2} have established low
regularity local well-posedness for for small initial data in $H^s$
for $s>(n+5)/2$.  Having a smallness condition on the initial data
allows the authors to avoid explicit non-trapping assumptions.  In
\cite{JMJMDT2012} the above authors also considered the
situation in which quadratic interactions are present and establish
low regularity well-posedness results. 

Our own contribution to this body of research is remove the smallness
condition for the work of Kenig, Ponce, Vega without imposing any
smallness on conditions on the initial data.  

Specifically, we assume the following conditions on the coefficients.  Let $B_M^k(0)=\{z\in \C^k\mid \abs{z}<M\}.$
\begin{enumerate}[labelsep=\parindent, leftmargin=*, label=(NL\arabic*)]
\item\label{NLRegularity} There exist $\tilde N=\tilde N(n)\in\N$ such
  that, for any $M>0$, $a_{jk}, b_{1,j}, b_{2,j}\in
  C_b^{\tilde N}(\R^{n}\times\R\times B_M^{2n+2}(0))$ for $j, k=1, 2, \ldots,
  n$, and $c_1, c_2\in C_b^{\tilde N}(\R^n\times \R \times B^2_M(0)).$
\item\label{NLReal2ndOrder} Let $(x, t, \vec z)\in \R^n\times \R\times
  \C^{2n+2}.$  The matrix  $A(x, t, \vec z) =
  \paren{a_{jk}(x, t, \vec z)}_{j,k=1, \ldots, n}$ is real valued.
\item\label{NLSymmetric} The matrix $A(x,t,\vec z)$ is symmetric for
  all $x\in \R^n$, $t\in \R$ and $\vec z\in B_M^{2n+2}(0).$

\item\label{NLElliptic} For $\vec z \in B_M^{2n+2}(0)$ the matrix
 $A(x,t,\vec z)$ is uniformly elliptic.  That is, there exists
 $\gamma_M>0$ such that
\[\gamma_M\abs{\xi}^2\leq \sum_{j,k=1}^n a_{jk}(x,t,\vec
z)\xi_j\xi_k\leq \gamma_M^{-1}\abs{\xi}^2,\]
for all $x\in \R^n$, $t\in \R$ and $\vec z\in B_M^{2n+2}(0).$

\item\label{NLFlat} The matrix $A(x,t,\vec z)$ is asymptotically flat.
  That is, there exists a constant $C_M$ such that $I-A(x,t,\vec z)$,
  together with any derivatives of $A(x,t,\vec z)$ up to order 2
  (not including $\del_t^2 A(x,t,\vec z)$), are bounded in absolute value
  by $\frac{C_M}{\jap{x}^2}$.

\item\label{NLFirstOrder}
  Here and throughout we let
  $\R^n=\bigcup_{\mu\in\Z^n} Q_\mu$ where $Q_\mu$ are unit cubes with
  vertices in $\Z^n$ and centers $x_\mu$.
  Suppose that, for $j=1,2$,  $b_j(x,t,0,0,\vec 0,\vec 0)=0$, and
  $\del_{z_i} b_j(x,t,0,0,\vec 0,\vec 0)=0$.

  Also, for some $C_M>0$, we have that $\del_{x_l}a_{jk}(x,t,\vec z) =
  \sum_{\mu\in\Z^n}\alpha_\mu\phi_\mu^{ljk}(x,t,\vec z)$ with
  $\alpha_\mu>0$, $\sum \alpha_\mu<C_M$, $\phi_\mu^{ljk}(\cdot,t,\vec
  z)\in C^{\tilde N}(\R^n)$ with $\norm{\phi_\mu^{ljk}}_{C^{\tilde N}}\leq 1$
  and uniformly for $t\in \R$ and $\vec z\in B_M^{2n+2}(0)$ we have
  $\supp \phi_{\mu}^{ljk}\subseteq Q_\mu^*$ (the double of $Q_\mu$)
  for $l,k,j=1\ldots n$.  Similarly for $\del_ta_{jk}$,
  $\del_{z_m}a_{jk}$, and $\del_t\del_{z_m}a_{jk}$.

\item\label{NLNoTrap} We associate to our coefficients and our initial
  data the symbol $$h(x,\xi)=-a_{jk}(x,0,u_0,\bar u_0,\grad
  u_0,\grad\bar u_0)\xi_j\xi_k .$$  We assume the bicharacteristic flow
  obtained from $h$ is non-trapping.  That is the solution to the
  system of ODE's
 \[\left\{
     \begin{aligned}
       &\frac{d}{dt}X_j(s,x,\xi)=\diff{h}{\xi_j}(X,\Xi) \\
       &\frac{d}{dt}\Xi_j(s,x,\xi)=-\diff{h}{x_j}(X,\Xi) \\
       & X(0,x,\xi)=x \quad\text{and}\quad\Xi(0,x,\xi)=\xi
     \end{aligned}
\right.\]

satisfies $\{X(s,x,\xi)\mid s>0\}$ and $\{X(s,x,\xi) \mid s<0\}$ are
unbounded for all $(x,\xi)\in\R^n\times\paren{\R^n\setminus\{0\}}$.
\end{enumerate}

\begin{thm}\label{solution}
  Under these assumptions there exist $\tilde N, s$ depending only on the
  dimension so that if we are given $u_0\in H^s$ and $f\in
  L^\infty([0,1];H^s)$, then there is a $T_0<1$
  depending on the norms of $u_0$ and $f$ and
  \nlinref{NLRegularity}-\nlinref{NLNoTrap} so that there is a unique
  solution to \eqref{theproblem}, $u(x,t)$, on the interval $[0,T_0]$
  satisfying
  \[u\in C([0,T_0];H^{s-1})\cap L^\infty([0,T_0];H^s).\]
\end{thm}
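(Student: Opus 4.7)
The plan is to prove Theorem \ref{solution} via an iteration scheme that reduces the quasilinear problem to a sequence of linear problems with variable coefficients, and then to pass to the limit using a combination of energy estimates and Kato-type smoothing estimates. First I would set up the iteration: define $u^{(0)}$ as a suitable extension of $u_0$ (say, the free Schr\"odinger evolution truncated in time) and inductively let $u^{(k+1)}$ solve the linear IVP
\begin{equation*}
\left\{\begin{aligned}
&\del_t u^{(k+1)} = ia_{jk}^{(k)}\del_{x_j}\del_{x_k}u^{(k+1)} + \vec b_1^{(k)}\cdot\grad u^{(k+1)} + \vec b_2^{(k)}\cdot \grad \bar u^{(k+1)} + c_1^{(k)}u^{(k+1)}+c_2^{(k)}\bar u^{(k+1)}+f,\\
&u^{(k+1)}(x,0)=u_0(x),
\end{aligned}\right.
\end{equation*}
where the coefficients with superscript $(k)$ are evaluated along $u^{(k)}$. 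The non-trapping hypothesis \nlinref{NLNoTrap} is set at $t=0$ using $u_0$, so I need to choose the iterates carefully so that the frozen bicharacteristic flow for each $u^{(k)}$ remains non-trapping on a small time interval; this follows from the asymptotic flatness assumption \nlinref{NLFlat} together with continuity of the flow under small perturbations of the symbol, provided $u^{(k)}$ stays in a bounded set in some suitable topology.

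The core of the argument is the linear theory. For fixed smooth $v$ close enough to $u_0$, the linear equation must be shown to admit a solution satisfying
\[
\norm{u}_{L^\infty_T H^s} + \norm{J^{s+1/2}u}_{L^2([0,T],\jap{x}^{-m}dxdt)} \leq C_T\paren{\norm{u_0}_{H^s}+\norm{f}_{L^\infty_T H^s}}.
\]
The standard obstruction is the first order term $\vec b_1\cdot \grad u$, whose imaginary part cannot be absorbed by naive energy methods. Following the strategy of \cite{CKGPLV1998} and \cite{CKGPLV2004}, I would construct a pseudodifferential operator $K=K(x,D)$ with real symbol so that the conjugated operator $K\paren{\del_t - ia_{jk}\del_{x_j}\del_{x_k}-\vec b_1\cdot \grad}K^{-1}$ has a first-order part with controllable imaginary piece, i.e.\ an anti-symmetric leading part modulo a zeroth-order remainder. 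The symbol of $K$ is built from the non-centered Doi Lemma advertised in the abstract: one integrates $\Im \vec b_1$ along the bicharacteristic flow of $h$ and uses non-trapping to keep the integral bounded without needing weighted data. Once $K$ is constructed, combining the resulting garding-type coercivity with Kato smoothing (derived from Doi's lemma applied to $h$) yields the linear estimate above.

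With the linear estimate in hand, the iteration is closed by showing uniform $H^s$ bounds on $u^{(k)}$ on a small time interval $[0,T_0]$ (using tame Moser estimates on compositions with the coefficients, which require the regularity \nlinref{NLRegularity} and the structural assumption \nlinref{NLFirstOrder} on $\del_x a_{jk}$), and then showing contraction of the differences $u^{(k+1)}-u^{(k)}$ in the weaker norm $L^\infty_TH^{s-1}$; the loss of one derivative in the contraction is standard for quasilinear problems and forces the mismatched regularity in the conclusion. A weak-limit argument then gives a solution $u\in L^\infty_TH^s$, and the equation plus the linear estimate promote this to $C([0,T_0];H^{s-1})$. Uniqueness, and continuous dependence, follow from applying the same linear estimate to the difference of two solutions at the $H^{s-1}$ level.

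The main obstacle, and the technical heart of the paper, will be the construction of the pseudodifferential conjugator $K$ without a weight on the initial data: the original Kenig--Ponce--Vega argument exploits $\jap{x}^2$ decay of $u_0$ to locate a center for the flow, whereas here I must run Doi's construction non-centered, using only the asymptotic flatness \nlinref{NLFlat} and the non-trapping hypothesis \nlinref{NLNoTrap} to produce a symbol in a suitable Calder\'on--Vaillancourt class. Verifying symbolic calculus bounds and checking that the remainder terms from conjugation remain of acceptable order is where the bulk of the work will lie; the decomposition $\del_x a_{jk}=\sum_\mu \alpha_\mu \phi_\mu^{ljk}$ in \nlinref{NLFirstOrder} is tailor-made for this analysis and will be used crucially to handle the variable-coefficient error terms cube by cube.
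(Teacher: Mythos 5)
Your proposal captures the broad plan the paper follows --- local smoothing via a non-centered Doi symbol, a pseudodifferential conjugation to control first-order terms, iteration, and passage to a limit --- but it diverges from the paper's actual argument in two places where the gap is substantive rather than cosmetic.

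First, the paper does not run a Picard iteration on linearized equations. It regularizes the quasilinear equation itself by adding an artificial viscosity term $-\epsilon\Delta^2 u$. The point is that Theorem~\ref{aplin}, the linear result, is only an \emph{a priori} estimate: it assumes a solution in $C([0,T];L^2)$ already exists. It does not assert solvability of the variable-coefficient linear Schr\"odinger problem. Your scheme ``$u^{(k+1)}$ solves the linear IVP with coefficients frozen at $u^{(k)}$'' requires such solvability at every step, and that is itself a nontrivial existence question not settled by a smoothing estimate alone. The viscosity route avoids it: with $\epsilon>0$ the regularized \emph{nonlinear} problem is parabolic, and the integral equation $\Gamma v(t)=e^{-\epsilon t\Delta^2}v_0+\int_0^t e^{-\epsilon(t-t')\Delta^2}(\scr{L}(v)v+f)\,dt'$ is a contraction in $X_{M_0,T_\epsilon}$ (Theorem~\ref{contractmapping}). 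The linear estimate is then applied to $J^{2m}u^\epsilon$ to obtain bounds uniform in $\epsilon$ and a uniform existence time, and one passes $\epsilon\to 0$. If you insist on Picard iteration you must separately supply the linear existence theory.

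Second, your conjugation $K(\del_t - ia_{jk}\del_{x_jx_k}-\vec b_1\cdot\grad)K^{-1}$ acts only on the $u$-equation and only on $\vec b_1\cdot\grad u$. The equation also contains the conjugate-coupled first-order term $\vec b_2\cdot\grad\bar u$, and this does not cancel under the naive energy pairing nor under a scalar conjugation of the $u$-equation alone. The paper's Step~2 addresses this by passing to the system for $(u,\bar u)$ and constructing a near-identity matrix operator $\Lambda = I-S$, with $S$ off-diagonal of order $-1$ built from a parametrix $\tilde{\scr L}$ for $\scr L$ (this is where ellipticity is indispensable), so that the first-order anti-diagonal block $B_{ad}$ is eliminated up to order-zero errors. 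Only after this diagonalization does the paper apply the Doi-symbol conjugation $\Psi_M$ and G\aa rding's inequality. Your sketch omits this diagonalization, and without it the energy estimate will not close: $\Re\ip{\vec b_2\cdot\grad\bar u}{u}$ is genuinely first order.

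A smaller remark: you should not need to re-verify non-trapping for each iterate. The paper assumes non-trapping only for the flow associated to the coefficients frozen at $t=0$ (evaluated on $u_0$), and Lemma~\ref{timedepdoi} shows the resulting Doi symbol $p_\mu$ remains effective for the time-varying symbol $h_t$ for all $t<T_1(C,C_0)$, using only asymptotic flatness. That is: you do not verify non-trapping along the iteration; you verify the Doi inequality is stable under small perturbations of the second-order coefficients, which is both weaker and easier.
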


The remainder of the paper is organized as follows.  In
section~\ref{chap:doi} we establish an uncentered version of Doi's
lemma necessary to later results.  In section~\ref{sec:linear} we
establish a priori linear results.  Finally, in
section~\ref{sec:nonlinear}, we give the proof of Theorem~\ref{solution}
%%% Local Variables: 
%%% mode: latex
%%% TeX-master: "qls"
%%% End: 

\section{Doi's Lemma}\label{chap:doi}

Doi's lemma is a key estimate that allows us to obtain local
smoothing.  It is the local smoothing estimates that allow us to
handle the first order terms in the linear theory.  In this section we
present two variants of Doi's lemma, one due to S.~Doi that holds in
the elliptic setting and one due to C.~Kenig, G.~Ponce, C.~Rolvung,
and L.~Vega in \cite{CKGPCRLV2005}, that also holds when the
coefficients are not necessarily elliptic. We then show how to extend
these results to corresponding ``non-centered'' versions that we need
for the precise form of our local smoothing estimates.

We consider the symbol $h(x,\xi)\in S^2_{1,0}$ defined by
$h(x,\xi)=\sum_{j,k=1}^na_{jk}(x)\xi_j\xi_k$.  Let $A(x)$ denote the
matrix $\left(a_{jk}(x)\right)_{j,k=1}^n$.  We impose the following
assumptions on $A(x)$

\begin{enumerate}[labelindent=0in, leftmargin=*, label=(D\arabic*)]
\item\label{D1} There exist $N=N(n)\in\N$, and $C>0$  so that  $a_{jk}(x)\in
  C_b^N(\R^n)$ for $j, k = 1, 2,\ldots, n,$ with norm controlled by $C$.
\item\label{D2} The functions  $a_{jk}(x)$ are real valued and the matrix $A(x)=\big(a_{jk}(x)\big)_{jk=1,...,n}$ is symmetric.
\item\label{D3} The matrix $A(x)$ is uniformly elliptic.  That is, for
  all $x\in \R^n$ there is a positive number $\gamma$ so that
$$C^{-1} |\xi|^2 \leq \sum_{j, k = 1}^n a_{jk}(x)\xi_j\xi_k \leq
C|\xi|^2.$$
\item\label{D4} The matrix $A(x)$ is asymptotically flat. That is
$$\abs{I-A(x)} \leq \frac{C}{\jap{x}^2} \qquad
\text{and}\qquad \abs{\grad_x a_{jk}(x)} \leq \frac{C}{\jap{x}^2}.$$ 
\item\label{D5} Let $X(s,x,\xi)$ and $\Xi(s,x,\xi)$ be the
  Hamiltonian flow associated to $h$.  That is $X$ and $\Xi$ are
  solutions to the following ODEs:
   \[\left\{
     \begin{aligned}
       &\frac{d}{dt}X_j(s,x,\xi)=2a_{jk}\big(X(s,x,\xi)\big)\Xi_k(s,x,\xi) \\
       &\frac{d}{dt}\Xi_j(s,x,\xi)=-\frac{\del a_{lk}}{\del x_j}\big(X(s,x,\xi)\big)\Xi_l(s,x,\xi)\Xi_k(s,x,\xi)\\
       & X(0,x,\xi)=x \quad\text{and}\quad\Xi(0,x,\xi)=\xi.
     \end{aligned}\right.\]
   Then for each pair $x,\xi$ with $\xi\neq 0$ we assume that the sets
   $\{X(s, x, \xi) \mid s>0\}$ and $\{X(s,x,\xi) \mid s<0\}$ are
   unbounded.
\end{enumerate}

\begin{lem}[S.~Doi \cite{SD1996}]\label{doilemma}
  With $a_{jk}(x)$ satisfy \ref{D1}-\ref{D5}, there exist a symbol $p\in S^0_{1,0}$, with
  semi-norms bounded in terms of $C$, and a constant $B\in (0,1)$
  depending on $C$ and \ref{D5} such that
  $$H_hp\mathrel{\mathop:}=\{h,p\}=\sum_{i=1}^n\frac{\del h}{\del
    \xi_i}\frac{\del p}{\del x_i} - \frac{\del h}{\del x_i}\frac{\del
    p}{\del \xi_i}\geq \frac{B\abs{\xi}}{\jap{x}^2} -
    \frac{1}{B}\quad \text{for all }x, \xi \in \R^n.$$
\end{lem}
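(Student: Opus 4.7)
The approach is to construct $p$ explicitly as an integral of a well-chosen weight along the Hamiltonian flow of $h$, arranged so that the Poisson bracket reproduces the desired positivity. Specifically, I would take
\[ p(x,\xi) \;=\; -\int_0^{\infty} \chi(\Xi(s,x,\xi))\, \frac{|\Xi(s,x,\xi)|}{\langle X(s,x,\xi)\rangle^{2}}\, ds, \]
where $\chi\in C^\infty(\R^n)$ vanishes on $\{|\eta|\le 1\}$ and equals $1$ on $\{|\eta|\ge 2\}$. Writing $g(y,\eta) = \chi(\eta)|\eta|/\langle y\rangle^{2}$ and using the identity $\frac{d}{ds}\big[p\circ\Phi_s\big] = \{h,p\}\circ\Phi_s$, a direct computation yields $\{h,p\}(x,\xi) = g(x,\xi) = \chi(\xi)|\xi|/\langle x\rangle^{2}$. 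Because $\chi$ is supported away from $\xi=0$ and energy conservation forces $|\Xi(s,x,\xi)|$ to vanish identically when $|\xi|$ is small enough, the integrand is compactly supported in this regime and $p$ extends smoothly across $\xi=0$; the defect between $\chi(\xi)|\xi|$ and $|\xi|$ on $\{|\xi|\le 2\}$ is uniformly bounded and absorbed into the additive $-1/B$ term.

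To justify convergence of the integral, I need two flow estimates. First, energy conservation $h(X(s),\Xi(s)) = h(x,\xi)$ combined with ellipticity \ref{D3} gives $|\Xi(s,x,\xi)| \sim |\xi|$ uniformly in $s$. Second, I would combine non-trapping \ref{D5} with asymptotic flatness \ref{D4} to produce a cut-off time $T_0(x,\hat\xi)$, continuous on the cosphere bundle and homogeneous of degree $-1$ in $\xi$, after which the flow is close to the free flow $(x+2s\xi,\xi)$ and satisfies a linear lower bound $\langle X(s,x,\xi)\rangle \gtrsim (|s|-T_0)|\xi|$. This is where non-trapping enters in a quantitative way: pointwise escape plus continuous dependence on initial data plus compactness of the unit sphere gives uniform escape on compact sets, and the flatness of $A$ at infinity then keeps the trajectory free afterwards. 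The integral then splits into a transient window of length $\lesssim T_0/|\xi|$ on which the integrand is $O(|\xi|)$, and a tail dominated by $\int_{T_0/|\xi|}^\infty |\xi|\,(s|\xi|)^{-2}\,ds$; both are uniformly bounded, so $p$ is bounded.

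The main technical obstacle is verifying $p\in S^0_{1,0}$, i.e.\ the symbol bounds $|\partial_x^\alpha \partial_\xi^\beta p(x,\xi)| \le C_{\alpha\beta}\langle\xi\rangle^{-|\beta|}$. Differentiating under the integral produces expressions in the flow derivatives $\partial_x^\alpha\partial_\xi^\beta(X,\Xi)$, which satisfy linear variational ODEs whose coefficients are derivatives of $A$ and therefore decay like $\langle X(s)\rangle^{-2}$ by \ref{D4}. A Gronwall argument, using the integrability $\int_0^\infty \langle X(s)\rangle^{-2}\,ds \lesssim |\xi|^{-1}$, shows that each flow derivative grows at most algebraically in $s$, while the integrand contributes the faster decay $\langle X(s)\rangle^{-2}\lesssim (1+s|\xi|)^{-2}$. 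Balancing these factors and carefully tracking the $|\xi|$-scaling forced by the homogeneity $\Phi_s(x,\lambda\xi) = \bigl(X(\lambda s,x,\xi),\,\lambda\,\Xi(\lambda s,x,\xi)\bigr)$ (each $\partial_\xi$ introduces a factor of $|\xi|^{-1}$) gives the required estimates. Once $p\in S^0_{1,0}$ is established, the bound $\{h,p\}(x,\xi) \ge B|\xi|/\langle x\rangle^{2} - 1/B$ is immediate from the identity $\{h,p\} = \chi(\xi)|\xi|/\langle x\rangle^{2}$, choosing $B$ small to handle both the factor of $\chi$ and the bounded discrepancy on $\{|\xi|\le 2\}$.
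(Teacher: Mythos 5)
The paper does not prove this lemma; it states it and cites Doi \cite{SD1996}, so there is no internal proof to compare against. Your approach — defining $p$ as the backward integral of $\chi(\Xi)|\Xi|/\jap{X}^{2}$ along the Hamiltonian flow so that $\{h,p\}$ equals the integrand at $s=0$ — is a genuine and well-known alternative to Doi's construction. The formal Poisson bracket identity, the use of energy conservation with \ref{D3} to pin $|\Xi(s)|\sim|\xi|$, the cutoff $\chi$ to handle small $\xi$, and the splitting of the $s$-integral into a transient window plus a free tail are all sound and give boundedness of $p$ together with the desired lower bound on $H_hp$.

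There is, however, a real gap in the symbol-class verification, which you yourself flag as the main obstacle. You assert that the variational ODEs for $\del_x^\alpha\del_\xi^\beta(X,\Xi)$ have ``coefficients \ldots which decay like $\jap{X(s)}^{-2}$ by \ref{D4}.'' But \ref{D4} controls only $I-A$ and $\grad_x a_{jk}$; it says nothing about the \emph{second} and higher derivatives of $a_{jk}$, which \ref{D1} merely bounds. Already the first-order $x$-variation of $\Xi$ satisfies
\[
\frac{d}{ds}\,\del_{x_i}\Xi_j \;=\; -\,\del_{x_j}\del_{x_k}a_{lm}(X)\,\del_{x_i}X_k\,\Xi_l\Xi_m \;-\; 2\,\del_{x_j}a_{lm}(X)\,\del_{x_i}\Xi_l\,\Xi_m,
\]
and the first coefficient is $O(|\xi|^2)$ with \emph{no} decay along the orbit under \ref{D1}--\ref{D5}. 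Coupled with the order-one coefficient $2A(X)$ in the equation for $\del_x X$, the off-diagonal block $\left(\begin{smallmatrix}0 & 2A \\ -\nabla^2 a\,\Xi\Xi & 0\end{smallmatrix}\right)$ has eigenvalues of size $\sim|\xi|$, and a Gronwall estimate then permits growth $e^{C|\xi|s}$ of the flow derivatives rather than the polynomial growth your balancing argument requires; even a single seminorm of $p$ in $S^0_{1,0}$ is then in doubt. Note that the nonlinear hypothesis \nlinref{NLFlat} \emph{does} demand decay of derivatives of $A$ up to order two, so for the paper's application your construction could be repaired by strengthening \ref{D4} accordingly (and going higher for higher seminorms); but as stated, the lemma asserts the conclusion under \ref{D1}--\ref{D5} only, and your argument does not reach it.

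This is precisely why Doi's own construction proceeds differently: $p$ is built algebraically from an explicit ``escape'' symbol of the form $\frac{x\cdot\xi}{\jap{x}|\xi|}$, whose Poisson bracket with $h$ is controlled near infinity using only $A-I$ and $\grad A$ (hence only \ref{D4} as stated), together with a compactness/patching argument near the origin that converts the pointwise non-trapping condition \ref{D5} into uniform positivity of $H_hp$ on a compact set. That route never differentiates the flow itself, so it never sees $\nabla^2 a$. Your integral-along-the-flow construction is cleaner conceptually and reproduces the exact identity $\{h,p\}=\chi(\xi)|\xi|/\jap{x}^2$, but it purchases this at the cost of needing decay on one more derivative of $A$ than the lemma assumes, and you should either add that hypothesis or replace the Gronwall step with a more careful analysis exploiting the nilpotent structure of the free variational flow.
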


It is worth noting that in the case that the coefficients $a_{jk}(x)$
are elliptic, one can use the fact that the symbol $h$ is preserved
under the Hamiltonian flow together with the ellipticity to deduce
that $C^{-2}\abs{\xi}^2 \leq \abs{\Xi(s,x,\xi)}^2\leq
C^2\abs{\xi}^2$.  This implies that the solutions $X$ and $\Xi$ exist
for all times.  

For our purposes we need a version of Doi's lemma that is not centered
at the origin.  As before we let $\R^n=\bigcup_{\mu\in\Z^n}Q_\mu$
with $Q_\mu$ unit cubes with vertices in the lattice $\Z^n$ (indexed
by some corner), and let $x_\mu$ be the center of $Q_\mu$.

\begin{lem}\label{uncentereddoi1}
  Suppose $a_{jk}$ satisfies \emph{\ref{D1}-\ref{D5}}.  Then there exists
  a symbol $p_\mu\in S^0_{1,0}$ such that $H_hp_\mu(x,\xi)\geq
  C_1\frac{\abs{\xi}}{\jap{x-x_\mu}^2}-C_2$, where $C_1$, $C_2$ and the
  semi-norms of $p_\mu$ can be bounded independently of $\mu$.
\end{lem}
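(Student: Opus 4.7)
The plan is to rerun Doi's construction of $p$ from Lemma~\ref{doilemma}, but with the shifted weight $\jap{x-x_\mu}^{-2}$ replacing $\jap{x}^{-2}$ throughout. The key structural point is that the Hamiltonian flow $(X(s,x,\xi),\Xi(s,x,\xi))$ depends only on $h$ and not on $\mu$, while the shifted weight $\jap{y-x_\mu}^{-2}$ and its $y$-derivatives are uniformly bounded in $\mu$ as functions of $y$. Thus if Doi's construction can be adapted, its output will automatically enjoy $\mu$-uniform semi-norm bounds.

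The first step is the pointwise estimate
\[
\sup_{(x,\xi)\in\R^n\times(\R^n\setminus\{0\})}\int_{-\infty}^{\infty} \frac{\abs{\Xi(s,x,\xi)}}{\jap{X(s,x,\xi)-x_\mu}^2}\,ds \leq K,
\]
with $K$ independent of $\mu$; this is the natural replacement for the analogous integral bound underlying Lemma~\ref{doilemma}. Conservation of $h$ along the flow together with \ref{D3} give $\abs{\Xi(s)}\sim\abs{\xi}$. Non-trapping \ref{D5} combined with \ref{D4} implies that after a finite time (uniformly controlled) the trajectory has entered the asymptotic regime in which $X(s)$ is approximately linear, $X(s)\approx x_\infty + 2sA_\infty\xi$, with direction independent of $\mu$. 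The bound then reduces to the elementary calculation $\int_\R |\xi|/(1+|x+2s\xi-x_\mu|^2)\,ds=O(1)$, uniform in $(x,\xi,x_\mu)$, after completing the square in $s$.

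Given this uniform integrability, the construction from \cite{SD1996}, whose core is to integrate a bounded profile of the target weight along the Hamiltonian flow, produces the desired $p_\mu\in S^0_{1,0}$ when the weight is taken to be $\jap{X(s)-x_\mu}^{-2}$. The lower bound $H_h p_\mu(x,\xi)\geq C_1|\xi|/\jap{x-x_\mu}^2 - C_2$ follows, exactly as in the centered case, by differentiating $s\mapsto p_\mu(X(s),\Xi(s))$ at $s=0$.

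The main obstacle is verifying the $\mu$-uniform symbol bounds $|\del_x^\alpha\del_\xi^\beta p_\mu(x,\xi)|\lesssim\jap{\xi}^{-|\beta|}$. Differentiating under the integral, one encounters $(x,\xi)$-derivatives of the flow (which are $\mu$-independent and controlled by Gr\"onwall applied to the variational equations, using \ref{D1}--\ref{D4}) and derivatives of the shifted weight (uniformly bounded in $\mu$). The only delicate issue is the long-time convergence of these integrals, which is handled exactly as in Doi's centered argument via an exponential cutoff or a careful truncation that leverages the integrability established in the first step. Since every constant in the argument comes from \ref{D1}--\ref{D5} and none depends on $\mu$, the resulting $p_\mu$ has semi-norms, and Doi constants $C_1, C_2$, uniform in $\mu$.
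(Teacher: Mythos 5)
Your proposal and the paper part ways at the outset: you attempt to re-run Doi's construction from scratch with the shifted weight $\jap{x-x_\mu}^{-2}$, while the paper avoids touching Doi's construction at all. The paper's observation is that the \emph{Laplacian} symbol $\tilde h(x,\xi)=\abs{\xi}^2$ is translation-invariant, so if $r$ is the centered Doi symbol obtained by applying Lemma~\ref{doilemma} to $\tilde h$ (which satisfies \ref{D1}--\ref{D5} trivially), then $r_\mu(x,\xi):=r(x-x_\mu,\xi)$ automatically satisfies $H_{\tilde h}r_\mu\geq \tilde C_1\abs{\xi}/\jap{x-x_\mu}^2-\tilde C_2$ with semi-norms independent of $\mu$ for free. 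One then writes $H_h r_\mu = H_{\tilde h}r_\mu + (H_h - H_{\tilde h})r_\mu$; the asymptotic flatness hypothesis \ref{D4} shows the second piece is $O(\abs{\xi}/\jap{x}^2)$ --- an error centered at the \emph{origin}, not at $x_\mu$ --- and this is exactly what the original centered symbol $p$ from Lemma~\ref{doilemma} is built to dominate. Taking $p_\mu = Np + r_\mu$ with $N$ large (depending only on \ref{D1}--\ref{D5}, not on $\mu$) finishes the proof in a few lines.

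The route you propose has a genuine gap at its central step. You reduce the lemma to a uniform flow-integral bound and then assert that ``the construction from \cite{SD1996} \ldots\ produces the desired $p_\mu\in S^0_{1,0}$ when the weight is taken to be $\jap{X(s)-x_\mu}^{-2}$.'' But Doi's escape-function construction is not a black box into which one feeds a weight: it is built around quantities adapted to the origin-centered geometry, and it is exactly the $\mu$-uniformity of the resulting $S^0_{1,0}$ semi-norms that constitutes the content of the lemma, not a corollary of it. Unlike the Laplacian, $h$ is not translation-invariant, so one cannot shift the construction to $x_\mu$ and expect the flow-based symbol estimates to carry over unchanged; differentiating your flow-integral in $x$ and $\xi$ brings in variational derivatives of the flow, and showing the resulting integrals converge with $\jap{\xi}^{-\abs{\beta}}$ decay \emph{uniformly in $\mu$} is precisely the work the paper's decomposition $Np+r_\mu$ lets you skip. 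You also leave the uniformity of the escape time implicit: it requires a compactness argument over $\{\abs{\xi}=1\}$, with general $\xi$ handled by the homogeneity $X(s,x,\lambda\xi)=X(\lambda s,x,\xi)$, $\Xi(s,x,\lambda\xi)=\lambda\Xi(\lambda s,x,\xi)$, neither of which you state. None of this is obviously false, but as written it defers the hard part rather than proving it.
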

\begin{proof}
  For $\abs{x_\mu}<10$ we take $p_{\mu}=p$.  The content of the lemma
  is for $\abs{x_\mu}>>0$.  Let $\tilde h(x,\xi)=\abs{\xi}^2$,
  applying Lemma~\ref{doilemma} to the Laplacian can find a symbol $r(x,\xi)$
  so that $H_{\tilde h}r(x,\xi)\geq \tilde C_1
  \frac{\abs{\xi}}{\jap{x}^2}-\tilde C_2$.

  We take $p_\mu(x,\xi)=Np(x,\xi)+r(x-x_\mu,\xi),$ with $N$ to be
  determined.  Let $r_\mu(x,\xi)=r(x-x_\mu,\xi)$.  We calculate
  \[\begin{split}
   H_hr_\mu(x,\xi)&=\sum_{i=1}^n (2a_{ik}(x)\xi_k) \frac{\del r_\mu}{\del x_i}
     + \frac{\del a_{jk}}{\del x_i}(x)\xi_j\xi_k\frac{\del r_\mu}{\del \xi_i}\\
     &= \sum_{i=1}^n2\xi_i\frac{\del r_\mu}{\del x_i} +
     2(a_{ik}(x)-\delta_{ik})\xi_k \frac{\del r_\mu}{\del x_i}
     + \frac{\del a_{jk}}{\del x_i}(x)\xi_j\xi_k\frac{\del r_\mu}{\del \xi_i}.
   \end{split}\]
   For the second term  we have
   that $\abs{2(a_{ik}-\delta_{ik})\xi_k\frac{\del r_\mu}{\del
       x_i}}\leq C\frac{\abs{\xi}}{\jap{x}^2}$ from
     \ref{D4} and
  the bounds for the semi-norms of  $r_\mu$.  Similarly for the third
  term we have that \\
  $\abs{\frac{\del a_{jk}}{\del x_i}\xi_j\xi_k\frac{\del r_\mu}{\del \xi_i}} \leq
  \frac{C\abs{\xi}}{\jap{x}^2}.$
  The first term give that $H_{\tilde h}r_\mu \geq
  \tilde C_1\frac{\abs{\xi}}{\jap{x-x_\mu}^2}-\tilde C_2$.
  Finally,
   \[H_h(p_\mu)=NH_h(p)+H_h(r_\mu)\geq
   NC_1\frac{\abs{\xi}}{\jap{x}^2}-C\frac{\abs{\xi}}{\jap{x}^2} +
   \tilde C_1\frac{\abs{\xi}}{\jap{x-x_\mu}^2}-NC_2-\tilde C_2.\]
  So we choose $N$ large enough so that
  $NC_1\frac{\abs{\xi}}{\jap{x}^2}-C\frac{\abs{\xi}}{\jap{x}^2}\geq 0$
  and we get \[H_h(p_\mu)\geq C_1\frac{\abs{\xi}}{\jap{x-x_\mu}^2}-C_2.\]
\end{proof}

\begin{rem}\label{doibump}
  We remark that, perhaps by increasing our choice of $N$ to $2N$, we
  can ensure that $H_hp_\mu\geq C_1\frac{\abs{\xi}}{\jap{x-x_\mu}^2} +
  C_2\frac{\abs{\xi}}{\jap{x}^2}-C_3$.  This will be important to us
  when we want to consider linear estimates where the coefficients
  of the second order term depend on time.
\end{rem}

%%% Local Variables:
%%% mode: latex
%%% TeX-master: "qls"
%%% End:

\section{Linear Results}\label{sec:linear}

In this section we consider the system
\begin{equation}\label{LinGenS}
\left\{\begin{aligned}
&\begin{aligned}\del_t u =& -\epsilon\Delta^2u+i\del_{x_j}a_{jk}(x,t)\del_{x_k}u + b_1(x,t,D)u +
\vec{b}_2(x,t)\cdot\grad \bar u\\  &+c_1(x,t,D)u+c_2(x,t,D)\bar u +f(x,t),\end{aligned}\\
&u(x,0)=u_0(x).
\end{aligned}\right.
\end{equation}

First we set some notation.  We denote by $A(x,t)$ the matrix
$(a_{jk}(x,t))_{j,k=1}^n$ and the symbol
$h(x,t,\xi)=\sum_{j,k=1}^na_{jk}(x,t)\xi_j\xi_k$.  For a function
$u(x,t)$ the Fourier transform of $u$ in the $x$ variable will be
denoted by $\hat u(\xi,t)$.  For a time varying symbol $q(x,t,\xi)$ we
use the following notation \[\Psi_qu(x,t) = \frac{1}{(2\pi)^n}\int
e^{ix\cdot\xi}q(x,t,\xi)\hat{u}(\xi,t)\,d\xi.\]
We let $\R^n=\bigcup_{\mu\in\Z^n}Q_\mu$
with $Q_\mu$ unit cubes with vertices in the lattice $\Z^n$.  We let
$x_\mu$ denote the the center of $Q_\mu$ and $Q_\mu^*$ denote its
concentric double.

When we use the linear estimates in the non-linear problem we will
evalunate our coefficients at some local solution.  For this reason
it will be important for the constant appearing in our final
inequality to depend only on the coefficients at $t=0.$ We therefore
take the convention that constants related to our coefficients at
$t=0$ will be denoted by $C_0$ and constants depending on our
coefficients at times other then 0 will be generically denoted by $C.$

We place the following assumptions on the coefficients. 

\begin{enumerate}[labelindent=0in, leftmargin=*, label=(L\arabic*)]

\item\label{LRegularity} There exist $M_L=M_L(n)\in\N$, and $C>0$ so
  that $a_{jk}(\cdot,t)$, $b_{2,j}(\cdot,t)$, $\del_ta_{jk}(\cdot,t)$,
  $\del_t b_{2,j}(\cdot,t) \in C_b^{M_L}(\R^n)$ for $j, k = 1,
  2,\ldots, n,$ with norm controlled by $C$.  We assume that,
  uniformly in $t$, the symbols $c_1(x,t,\xi)$, $c_2(x,t,\xi)\in
  S^0_{1,0}$ and $b_1(x,t,\xi)\in S^1_{1,0}$ with seminorms controlled
  by $C$.  In addition, we have that the norms of $a_{jk}(x,0)$
  $b_{2,j}(x,0)$ for $j,k=1, \ldots, n$ in $C^{M_L}_b$ together with
  the seminorms of $b_1(x,0,\xi), c_1(x,0,\xi)$ and $c_2(x,0,\xi)$ are
  controlled by $C_0$.

\item\label{LElliptic} The matrix
  $A(x,t)=\big(a_{jk}(x,t)\big)_{j,k=1\ldots n}$ has real valued
  entries, is symmetric, and is uniformly elliptic.
  That is, there is a positive number $C$ so that $$C^{-1} |\xi|^2 \leq
  \sum_{j, k = 1}^n a_{jk}(x,t)\xi_j\xi_k \leq C|\xi|^2.$$  
  Further at $t=0$ we have 
  $$C_0^{-1} |\xi|^2 \leq \sum_{j, k = 1}^n a_{jk}(x,0)\xi_j\xi_k \leq C_0|\xi|^2.$$

\item\label{LFlat} The matrix $A(x,t)$ is asymptotically flat. That is,
\[\abs{I-A(x,t)} + \abs{\grad_x A(x,t)} + \abs{\del_ta_{jk}(x,t)} +
     \abs{\del_t\del_{x_i}a_{jk}(x,t)} \leq \frac{C}{\jap{x}^2} \] and
\[\abs{I-A(x,0)} + \abs{\grad_x A(x,0)} + \abs{\del_ta_{jk}(x,0)} +
     \abs{\del_t\del_{x_i}a_{jk}(x,0)} \leq \frac{C_0}{\jap{x}^2}.\]

\item\label{LFirstOrder} The symbol $b_1(x,t,\xi)$ satisfies an
  estimate of the form \[\abs{\Re b_1(x,0,\xi)}\leq \sum_{\mu\in\Z^n}
  \beta_\mu^0\varphi_\mu(x)\abs{\xi},\] where $\beta_\mu^0\geq 0$,
  $\sum_{\mu\in\Z^n}\beta_\mu^0\leq C_0$, $\|\varphi_\mu\|_{C^{M_L}}\leq
  1$ and $\supp \varphi_\mu\subseteq Q_\mu^*$.  Also assume that
  \[\del_t \paren{\Re b_1(x,t,\xi)} = \sum_{\mu\in Z^n}
  \tilde\beta_\mu(t)\varphi_\mu(x,t,\xi),\] where
  $\tilde\beta_\mu(t)\geq 0$, and
  $\sum_{\mu\in\Z^n}\tilde\beta_\mu(t)\leq C$.  For all $t\in \R_+$
  the time varying symbols $\varphi_\mu(x,t,\xi)\in S^1_{1,0}$ with
  seminorms bounded by 1, independently of $t$ and
  $\mu$, and $\supp\varphi_\mu(\cdot,t,\xi)\subseteq Q_\mu^*$.

\item\label{LNoTrap} We assume that Hamiltonian flow associated to
  $h_0(x,\xi):=h(x,0,\xi)$ is non-trapping.  Let $p_\mu(x,\xi)$ be the
  Doi symbol for cube $Q_\mu$ associated to as constructed in the
  previous section.  We assume that these symbols satisfy
  \[H_{h_0}p_\mu \geq
  \frac{1}{C_0}\paren{\frac{\abs{\xi}}{\jap{x}^2} + %
  \frac{\abs{\xi}}{\jap{x-x_\mu}^2}}-C_0.\]
  The bounds in our arguments also depend on a finite number of
  seminorms of $p_\mu$ in $S^0_{1,0}$ and we assume these seminorms
  are controlled by $C_0$.  See Remark \ref{doibump}  in Section
  \ref{chap:doi} for this version of Doi's Lemma.
\end{enumerate}

\begin{thm}\label{aplin}
  Suppose that $u_0\in L^2$ and that there is a solution $u(x,t)$ to
  \eqref{LinGenS} in $C([0,T];L^2)$, where the coefficients satisfy
  \linref{LRegularity}-\linref{LNoTrap}.  Then there exist real
  numbers $T=T(C,C_0,\paren{\beta_\mu^0}_{\mu\in\Z^n})$ and
  $A=A(C_0,\paren{\beta_\mu^0}_{\mu\in\Z^n})$ so that if $f\in
  L^1([0,T];L^2)$, then $u$ satisfies
  \[\sup_{0\leq t\leq T}\|u(\cdot,t)\|_2^2+\sup_{\mu\in\Z^n}
  \|J^{1/2}u\|_{L^2(Q_\mu\times [0,T])}^2 \leq
    A\paren{\|u_0\|_2^2+\paren{\int_0^T\|f(\cdot,t)\|_2\,dt}^2}.\]
\end{thm}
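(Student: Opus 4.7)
The plan is to adapt the pseudodifferential gauge and modified-energy approach of Kenig--Ponce--Vega \cite{CKGPLV1998,CKGPLV2004} to our cube-indexed, non-centered setting. For each $\mu \in \Z^n$, apply Lemma~\ref{uncentereddoi1} together with Remark~\ref{doibump} to the time-frozen symbol $h_0(x,\xi) = h(x,0,\xi)$ to produce the Doi symbol $p_\mu(x,\xi)$, and form the bounded zeroth-order symbol $k_\mu(x,\xi) = \exp(\lambda p_\mu(x,\xi))$ for a large parameter $\lambda$ to be chosen. I symmetrize $k_\mu$ so that $\overline{\Psi_{k_\mu} v} = \Psi_{k_\mu}\bar v$, as in \cite{CKGPLV1998}; this is essential to handle the $\vec b_2\cdot \grad \bar u$ coupling without diagonalizing the system. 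Since $p_\mu\in S^0_{1,0}$ has semi-norms uniform in $\mu$ controlled by $C_0$, so does $k_\mu$, so all forthcoming operator bounds will be uniform in $\mu$. I take as modified energy
\[\mathcal{E}_\mu(t) = \norm{\Psi_{k_\mu} u(t)}_2^2 + M\norm{u(t)}_2^2,\]
with $M$ a large constant depending on $C_0$.

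\textbf{Differentiation and absorption of bad terms.} Differentiating $\mathcal{E}_\mu$ in $t$ and substituting \eqref{LinGenS}, the second-order term contributes (after symbol calculus and integration by parts) the operator with symbol $\{h,\,k_\mu^2\}$; decomposing $h = h_0 + (h-h_0)$, Lemma~\ref{uncentereddoi1}, Remark~\ref{doibump}, and sharp G\aa rding yield a lower bound
\[\lambda C_0^{-1}\paren{\norm{\jap{x-x_\mu}^{-1}J^{1/2}u}_2^2 + \norm{\jap{x}^{-1}J^{1/2}u}_2^2} - \lambda C_0 \norm{u}_2^2,\]
while the discrepancy $\{h-h_0,\,k_\mu^2\}$ is controlled using \linref{LFlat}. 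The real part of $\vec b_1\cdot\grad u$, by \linref{LFirstOrder}, has symbol dominated by $\sum_\nu \beta_\nu^0\varphi_\nu(x)\abs{\xi}$, giving an operator contribution controlled by a constant times $\sum_\nu \beta_\nu^0 \norm{\jap{x-x_\nu}^{-1}J^{1/2}u}_2^2$. Choosing $\lambda$ large in terms of $C_0$ and $\sum_\nu \beta_\nu^0$ lets the first positive Doi piece absorb this. The second positive piece from Remark~\ref{doibump} is reserved to absorb the time-drift contributions from $\del_t a_{jk}$ and $\del_t(\Re b_1)$; choosing $T$ small completes this absorption. The $\vec b_2\cdot\grad\bar u$ coupling is handled via the conjugation property of $\Psi_{k_\mu}$, reducing it to an order-zero commutator bounded by Cauchy--Schwarz against the local smoothing. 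The zeroth-order operators $c_1, c_2$ are $L^2$-bounded by Calder\'on--Vaillancourt, and the regularization $-\epsilon\Delta^2 u$ contributes only the nonpositive dissipation $-2\epsilon \norm{\Delta \Psi_{k_\mu} u}_2^2$ plus commutators controllable by Cauchy--Schwarz against that dissipation, so the whole estimate is uniform in $\epsilon$.

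\textbf{Closing and the main difficulty.} Assembling the above gives a differential inequality
\[\frac{d}{dt}\mathcal{E}_\mu(t) + c_0 \norm{\jap{x-x_\mu}^{-1}J^{1/2}u(\cdot,t)}_2^2 \leq G(t)\mathcal{E}_\mu(t) + 2\norm{\Psi_{k_\mu}u(\cdot,t)}_2\norm{f(\cdot,t)}_2,\]
with $G\in L^1([0,T])$ depending on $C$, $C_0$, $(\beta_\mu^0)$. Gr\"onwall then yields the bound on $\mathcal{E}_\mu(T)$ together with the integrated local smoothing, with the prefactor $A$ depending only on $C_0$ and $(\beta_\mu^0)$ as required; the forcing term enters as $\paren{\int_0^T\norm{f}_2}^2$ after one application of Young. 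Taking the supremum over $\mu$ is legitimate because every constant in the construction is uniform in $\mu$, and this produces the claimed estimate. The main difficulty is the tight control on the dependence of $A$: since $A$ may not depend on the time-evolving constant $C$, the gauge $\Psi_{k_\mu}$ must be built from $t=0$ data only, and all $t$-varying contributions of the coefficients must be absorbed into the two positive Doi terms with the smallness of $T$ making up the remainder. A secondary technical point is engineering $k_\mu$ to commute with conjugation while preserving both its provenance from $p_\mu$ and its uniformity in $\mu$.
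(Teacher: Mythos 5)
Your plan hinges on constructing a scalar gauge $\Psi_{k_\mu}$ with $k_\mu=\exp(\lambda p_\mu)$, $p_\mu$ a Doi symbol, that simultaneously satisfies the conjugation identity $\overline{\Psi_{k_\mu}v}=\Psi_{k_\mu}\bar v$; you flag this as a ``secondary technical point,'' but in fact it is a genuine obstruction. For $\overline{\Psi_{k_\mu}v}=\Psi_{k_\mu}\bar v$ one needs $\overline{k_\mu(x,-\xi)}=k_\mu(x,\xi)$, which for a real symbol $p_\mu$ forces $p_\mu(x,-\xi)=p_\mu(x,\xi)$, i.e.\ evenness in $\xi$. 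But a Doi symbol cannot be even: with $h(x,\xi)=a_{jk}(x)\xi_j\xi_k$ even in $\xi$, the Poisson bracket $H_hp$ of an even $p$ is odd in $\xi$, while averaging the inequality $H_hp(x,\xi)\geq B\abs{\xi}/\jap{x}^2-B^{-1}$ over $\pm\xi$ shows the \emph{even} part of $H_hp$ must carry the positivity. That even part comes only from the \emph{odd} part of $p$, so $p_\mu$ has a nontrivial odd part (one may take it purely odd), and then $\overline{\exp(\lambda p_\mu(x,-\xi))}=\exp(-\lambda p_\mu(x,\xi))\neq k_\mu(x,\xi)$. Any attempt to symmetrize (e.g.\ replacing $e^{\lambda p_\mu}$ by $\cosh(\lambda p_\mu)$) restores the conjugation symmetry but destroys the sign of $\{h,k_\mu^2\}$, since $\partial_p\cosh^2(\lambda p)=\lambda\sinh(2\lambda p)$ changes sign with $p_\mu$. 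So the single scalar gauge you rely on to dispatch the $\vec b_2\cdot\grad\bar u$ coupling does not exist in the variable-coefficient, Doi-symbol setting; the device you cite from \cite{CKGPLV1998} works there because that gauge is not a Doi exponential (local smoothing in that paper is supplied by the constant-coefficient $\scr{L}$).

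The paper's proof avoids this by doing precisely what you say you want to skip. In Step 2 it writes the problem as a system for $\vec w=(u,\bar u)$ and constructs $\Lambda=I-S$, with $S$ an order $-1$ matrix $\Psi$DO built from $B_{12}=\vec b_2\cdot\grad$ and $\tilde{\scr L}$, so that in the variable $\vec z=\Lambda\vec w$ the first-order terms are diagonal. Only then (Step 3) does it apply the Doi gauge, and it does so as a \emph{matrix} $\Psi_M=\mathrm{diag}(\Psi_{q_1},\Psi_{q_2})$ with $q_1=\exp(\theta_R\tilde C_0\gamma_{\mu_0})$ and $q_2=\exp(-\theta_R\tilde C_0\gamma_{\mu_0})$ — two different exponentials of the odd Doi symbol, adapted to the two components, satisfying $q_2(x,\xi)=\overline{q_1(x,-\xi)}$. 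The rest of your outline (the construction of $\gamma_{\mu_0}=p_{\mu_0}+\sum_\mu\beta_\mu^0p_\mu$, Lemma~\ref{timedepdoi} to freeze $t=0$ so that $A$ depends only on $C_0$ and $(\beta_\mu^0)$, sharp G\aa rding, absorption of the $\del_t a_{jk}$ and $\del_t\Re b_1$ drift via smallness of $T$, Cauchy--Schwarz/Young on $\vec G$, and the supremum in $\mu$) matches the paper's Step 3, but it cannot be executed until the $\bar u$ coupling at first order has been removed by the diagonalization you omitted.
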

\begin{proof}

We break the proof of this theorem into several steps.

\noindent\underline{Step 1.  Reduction to a system.}

Let $\vec w =
\begin{pmatrix}
  u\\ \bar u
\end{pmatrix}$, $\vec f = \begin{pmatrix}
  f\\ \bar f
\end{pmatrix}$, and $\vec{w}_0=\begin{pmatrix}
  u_0\\ \bar u_0
\end{pmatrix}.$
Let $\scr{L}(x,t)$ denote the operator $\del_{x_j}(a_{jk}(x,t)\del_{x_k}\cdot)$.
Then using the equations for $u$ and $\bar u$, we see that $w$ satisfies

\begin{equation}
\left\{\begin{aligned}
&\del_t \vec{w} = -\epsilon\Delta^2 I \vec{w}+\paren{iH+B+C}\vec{w} +
\vec{f}(x,t),\\
&\vec{w}(x,0)=\vec{w}_0(x),\end{aligned}\right.
\end{equation}
where \[H=
\begin{pmatrix}
  \scr{L}(x,t) & 0\\
  0 & -\scr{L}(x,t)
\end{pmatrix},\quad B=
\begin{pmatrix}
  B_{11} & B_{12} \\
  B_{21} & B_{22}
\end{pmatrix}:=
\begin{pmatrix}
  b_1(x,t,D) &  b_2(x,t)\cdot\grad \\
  \overline{b_2(x,t)}\cdot \grad & \overline{b_1(x,t,D)}
\end{pmatrix}\] \[\text{and }C=
\begin{pmatrix}
  c_1(x,t,D) & c_2(x,t,D) \\
  \overline{c_2(x,t,D)} & \overline{c_1(x,t,D)}
\end{pmatrix}.\]

Note that for the rest of this chapter $\ip{\vec u}{\vec v}=\int
u_1\bar v_1+u_2\bar v_2\,dx$ and $\norm{\vec u}_2^2=\ip{\vec u}{\vec u}$.

\noindent \underline{Step 2.  Diagonalize the first order terms.}

We now define an operator
$S= \begin{pmatrix}
  0 & s_{12} \\
  s_{21} & 0
\end{pmatrix}$ where $s_{12}$ and $s_{21}$ will be defined to be time
varying $\Psi$DO's and have symbols in $S_{1,0}^{-1}$ uniformly in
$t$.  We being by choosing $\phi \in C_0^\infty(\R^n)$ so that $\phi(y)=1$ for
$\abs{y}<1$ and $\phi(y)=0$ for $\abs{y}\geq 2$.  Let
$\theta_R(\xi)=1-\phi(\xi/R)$ and $\theta(\xi)=\theta_1(\xi).$

Let $\tilde h(x,t,\xi)=\theta_R(\xi)h^{-1}(x,t,\xi).$ Notice that, by
ellipticity \mbox{$h(x,t,\xi)\geq C^{-1}\abs{\xi}^2$}, hence
$\tilde h$ is a smooth function.  Let $\tilde{\scr{L}}=\Psi_{\tilde
  h}$, then we see that $\tilde{\scr{L}}\scr{L}=I+\Psi_{r_1}$ with
$r_1\in S_{1,0}^{-1}$ uniformly in $t$.

We define $S_{12}=\frac{1}{2}iB_{12}\tilde{\scr{L}}$ and
$S_{21}=-\frac{1}{2}iB_{21}\tilde{\scr{L}}$.  We denote the symbols of
$S_{12}$ and $S_{21}$ by $s_{12}(x,t,\xi)$ and $s_{21}(x,t,\xi)$
respectively.  Clearly $s_{ij}(x,t,\xi)\in S_{1,0}^{-1}$ uniformly in
$t$.  Let $\Lambda = I-S$, if we choose $R$ large enough, then we can
control the norms of $\Lambda$ and $\Lambda^{-1}$ by constants (see
Kenig's Park City Lecture 2 \cite{CK2005}).

We will use $\Lambda$ to change variables, and the resulting system
will have diagonal first order terms.  We first perform some
calculations that are necessary to rewrite the system in terms of
$\Lambda w$.
\[\begin{split}&-iH\Lambda +i\Lambda H=
  -iHI+iHS+iIH-iSH=-i\paren{SH-HS}\\=&
-i\paren{
  \begin{pmatrix}
    0 & S_{12}\\
    S_{21} & 0
  \end{pmatrix}
  \begin{pmatrix}
    \scr{L} & 0 \\
    0  & -\scr{L}
  \end{pmatrix} -
  \begin{pmatrix}
    \scr{L}  & 0 \\
   0 & -\scr{L}
  \end{pmatrix}
  \begin{pmatrix}
    0 & S_{12}\\
    S_{21} & 0
  \end{pmatrix}
}\\=&
\begin{pmatrix}
  0 & iS_{12}\scr{L}+i\scr{L}S_{12} \\
  -iS_{21}\scr{L}-i\scr{L}S_{21} & 0
\end{pmatrix}.
\end{split}
\]

Notice that $\scr{L}S_{12}=S_{12}\scr{L}+E^0_1$, where $E^0_1$ is an
error of order $0$.  Similarly $\scr{L}S_{21}=S_{21}\scr{L}+E_2^0$
with $E_2^0$ of order $0.$

Hence
$iS_{12}\scr{L}+i\scr{L}S_{12}=2iS_{12}\scr{L}+iE_1^0=-B_{12}\tilde{\scr{L}}\scr{L}+iE_1^0=-B_{12}+E_3^0$
and
$-iS_{21}\scr{L}-i\scr{L}S_{21}=-2iS_{21}\scr{L}-iE_2^0=-B_{21}\tilde{\scr{L}}\scr{L}+iE_2^0=-B_{21}+E_4^0$
with $E_3^0$ and $E_4^0$ errors of order $0$.

We write $B=B_d+B_{ad}$ where $B_d=
\paren{\begin{smallmatrix}
  B_{11} & 0 \\
  0  & B_{22}
\end{smallmatrix}}
$ and $B_{ad}=\paren{
\begin{smallmatrix}
  0 & B_{12} \\
  B_{21} & 0
\end{smallmatrix}}.
$  Now, \[\Lambda B_{ad}=IB_{ad}-SB_{ad}=B_{ad}-
\begin{pmatrix}
  S_{12}B_{21} & 0 \\
  0 & S_{21}B_{12}
\end{pmatrix}
=B_{ad}+E^0_{ad}\]
with $E_{ad}^0$ of order 0.

For the other terms we will want to commute $\Lambda$ and our
operators, in order to derive the equation for $\Lambda \vec w$.

Starting with
$\Lambda\del_t=\del_t\Lambda+[\del_t,\Lambda]=\del_t\Lambda+\del_t S$,
where this last expression is the matrix of $\Psi$DO's whose symbols
are given by $\del_t s_{jk}(x,t,\xi)$.  Using the bounds for $\del_t
b_2(x,t,\xi)$ and $\del_ta_{jk}(x,t,\xi)$ we again see that these
symbols are uniformly in $S_{1,0}^{-1}$.

Notice that $\Lambda B_d=B_d-SB_d$ and $B_d=B_d\Lambda+B_dS$, so that
$\Lambda B_d=B_d\Lambda+B_dS-SB_d=B_d\Lambda+E_d^0$ with $E_d^0$ is of
order 0.  \[\Lambda \epsilon I\Delta^2=\epsilon \Delta^2\Lambda -
\epsilon
\begin{pmatrix}
  0 & \Delta^2S_{12}-S_{12}\Delta^2 \\
  \Delta^2S_{21}-S_{21}\Delta^2 & 0
\end{pmatrix}
=\epsilon \Delta^2 I \Lambda + \epsilon \tilde{R}
\]
where $\tilde{R}$ is a matrix whose entries are operators of order 2.

We set $R=\tilde{R}\Lambda^{-1}$, which is still of order 2.  We write
\[\Lambda C + \del_tS+E^0_{ad}+E^0_d= \paren{\Lambda C \Lambda^{-1} + \del_tS\Lambda^{-1}+E^0_{ad}\Lambda^{-1}+E^0_d\Lambda^{-1}}\Lambda =:
\tilde{C}\Lambda\] with $\tilde{C}$ of order 0.

Lastly set $\vec F = \Lambda \vec f$. Define $\vec z=\Lambda \vec
w$ and apply $\Lambda$ to our equation.  We have

\[\Lambda \del_t\vec w=i\epsilon\Lambda\Delta^2 I \vec w
+\paren{i\Lambda H +\Lambda B +\Lambda C}\vec w+\Lambda \vec f\]

Using our calculations above we have

\[\del_t \vec z=-\epsilon \Delta^2 I \vec z + \epsilon R\vec z+ iH\vec z -
\begin{pmatrix}
  0 & B_{12}\\
  B_{21}& 0
\end{pmatrix}\vec w +B_d \vec z+B_{ad}\vec w+ \tilde{C}\vec{z}+\vec F.
\]
Hence if we set $\vec z_0=\Lambda \vec w_0$ to arrive at a system with
diagonal first order terms, namely

\begin{equation*}
\left\{\begin{aligned}
&\del_t \vec{z} = -\epsilon\Delta^2 I \vec{z}+\epsilon R\vec z + i H \vec z
+B_d\vec z +\tilde{C}\vec z +\vec F, \\
&\vec{z}(x,0)=\vec{z}_0(x).\end{aligned}\right.
\end{equation*}

As we pointed out earlier we have control of the norms of $\Lambda$
and $\Lambda^{-1},$ so deriving our desired estimates for $\vec z$
will imply the estimates for $\vec w$.

Since we work in slightly unusual norms it seems a good time to recall
them and justify this last statement.

\begin{mydef}
  Let $\R^n=\bigcup_{\mu\in \Z^n}Q_\mu$ as usual.  Let  $f:\R^n\times
  \R\to \C$ be measurable function.  We define
   \[\tnorm{f}_T=\sup_{\mu\in \Z^n}\norm{f}_{L^2(Q_{\mu}\times [0,T])}\]
and
\[\tnorm{f}'_T=\sum_{\mu\in\Z^n}\norm{f}_{L^2(Q_\mu\times[0,T])}.\]
\end{mydef}

\begin{thm}
  For $a\in S_{1,0}^0$ there is an $N(n)$ so that
  $\tnorm{\Psi_af}_T\leq C\tnorm{f}_{T}$ and $\tnorm{\Psi_af}'_{T}\leq
  C \tnorm{f}'_{T}$
\end{thm}

\begin{proof}
  See Kenig's Park City Lecture notes, Lecture 2 \cite{CK2005}.
\end{proof}

Now we return to our linear estimates.
It is important for the non-linear theory that we only make our
non-trapping assumptions at $t=0.$  The following lemma allows us to
handle time varying leading coefficients.

\begin{lem}\label{timedepdoi}
  Let $h(x,\xi)=a_{jk}(x,0)\xi_j\xi_k$ and let $p_\mu$ be the Doi
  symbol corresponding to $h$ centered at cube $Q_\mu$.  Then there
  exists a $T_1=T_1(C,C_0)$ so that uniformly for all $t<T_1$ the time varying
  symbol $h_t(x,\xi)=a_{jk}(x,t)\xi_j\xi_k$ satisfies
  \[H_{h_t}p_\mu(x,\xi)\geq \frac{1}{C_0}\frac{\abs{\xi}}{\jap{x-x_\mu}^2}-C_0.\]
\end{lem}
\begin{proof}
  By direct calculation we have
\[
\begin{aligned}
H_{h_t}p_\mu&=\sum_{i=1}^n \diff{h}{\xi_i}\diff{p_\mu}{x_i} -
\diff{h}{x_i}\diff{p_\mu}{\xi_i}+ \paren{\diff{h_t}{\xi_i} -
  \diff{h}{\xi_i}}\diff{p_\mu}{x_i}
- \paren{\diff{h_t}{x_i}-\diff{h}{x_i}}\diff{p_\mu}{\xi_i}\\
&=
  H_hp_\mu+
  \sum_{i=1}^n 2\paren{a_{ik}(x,t)-a_{ik}(x,0)}\xi_k\diff{p_\mu}{x_i}\\
  &\qquad\qquad\qquad\qquad-\paren{\diff{a_{jk}(x,t)}{x_i}
  -\diff{a_{jk}(x,0)}{x_i}}\xi_j\xi_k\diff{p_\mu}{\xi_i}.
\end{aligned}
\]

From the asymptotic flatness condition \linref{LFlat} there is a
$T_1=T_1(C,C_0)$ so that if $t<T_1$ we have that we have that
\[
\abs{\sum_{i=1}^n 2\paren{a_{ik}(x,t)-a_{ik}(x,0)}\xi_k\diff{p_\mu}{x_i}
 - \paren{\diff{a_{jk}(x,t)}{x_i}-\diff{a_{jk}(x,0)}{x_i}}
 \xi_j\xi_k\diff{p_\mu}{\xi_i}}\leq \frac{1}{C_0}\frac{\abs{\xi}}{\jap{x}^2}.
\]

Now using \linref{LNoTrap} we get that
\[H_{h_t}p_\mu\geq H_h p_\mu-\frac{1}{C_0}\frac{\abs{\xi}}{\jap{x}^2}\geq
\frac{1}{C_0}\frac{\abs{\xi}}{\jap{x-x_\mu}^2}-C_0.\]
\end{proof}

\noindent
\underline{Step 3. Energy Estimates.}

The goal of this section is to conclude the proof.  The program is to
again introduce an invertible change of variables, this time based on
Doi's Lemma.  It is Doi's lemma that allows us to absorb the first
order terms.

Set \[\Psi_M=
\begin{pmatrix}
  \Psi_{q_1} & 0 \\
  0 & \Psi_{q_2}
\end{pmatrix}
\] where $\Psi_{q_1}$, $\Psi_{q_2}$ are invertible $\Psi$DO's of order 0 that
will be defined below.

First we compute the necessary commutators that arise in the change of
variables.
For the leading order terms
\[\Psi_M\epsilon\Delta^2I = \epsilon\Delta^2I\Psi_M\ +
\epsilon \tilde R^3\Psi_M\] with $\tilde R^3$ of order 3.  The second
order remainder term yields.  $\Psi_M\epsilon R=\epsilon R
\Psi_M+\epsilon \tilde R^1\Psi_M$ We collect these remainder terms by
setting $R^3=\tilde R^3+R+\tilde R^1$, which is of order 3.  The
remaining terms pose no difficulty.  The first and zeroth order terms
simply give $\Psi_M B_d=B_d\Psi_M+E^0_5$, $\Psi_M\tilde C=\tilde
C\Psi_M+E^0_6$ and lastly we set $\vec G=\Psi_M\vec F$.

Again we absorb the error terms of order 0 into $\tilde C$.  By
setting $\vec \alpha = \Psi_M\vec z$ and $\vec \alpha_0=\Psi_M\vec
z_0$ we arrive at the system

\begin{equation}
\left\{\begin{aligned}
&\del_t \vec{\alpha} = -\epsilon\Delta^2 I \vec{\alpha}+\epsilon
R^3\alpha - i [H,\Psi_M] \vec z
+iH\vec \alpha +B_d\vec \alpha +\tilde{C}\vec \alpha +\vec G \\
&\vec{\alpha}(x,0)=\vec{\alpha}_0(x).\end{aligned}\right.
\end{equation}

To construct $\Psi_M$ we let $\R^n=\bigcup_{\mu\in\Z^n}Q_\mu$ as
usual.  Fix a cube $Q_{\mu_0}$ and let
\[\gamma_{\mu_0}(x,\xi)=p_{\mu_0}(x,\xi)+\sum_{\mu\in\Z^n}\beta_\mu^0 p_\mu(x,\xi)\]
with $\beta_\mu^0$ as in \linref{LFirstOrder}.  Notice that
$\gamma_{\mu_0}\in S_{1,0}^0$ with seminorms controlled in terms of $C_0$.

Let $q_1(x,\xi)=\exp(\theta_R(\xi)\tilde{C}_0\gamma_{\mu_0}(x,\xi))$ and
$q_2(x,\xi) = \exp(-\theta_R(\xi)\tilde{C}_0gamma_{\mu_0}(x,\xi)).$
Where $\tilde{C}_0$ depends on $C_0$ will be chosen below.  Notice that
again, if we take $R$ large we may ensure that $\Psi_M$ is
invertible uniformly in $\mu_0$.   We now
compute  \[-i[H,\Psi_M] = -i
\begin{pmatrix}
  \scr{L}\Psi_{q_1}-\Psi_{q_1}\scr{L} & 0 \\
  0 & -\scr{L}\Psi_{q_2}+\Psi_{q_2}\scr{L}
\end{pmatrix}.
\]

Let $\ell(x,t,\xi)$ be the symbol for $\scr{L}$, then
$\ell(x,t,\xi) = a_{jk}(x,t)\xi_j\xi_k + \del_{x_j}a_{jk}(x,t)\xi_k =
h_t(x,\xi) + \ell_1(x,t,\xi)$.
Note that $\{\ell_1,q_i\}\in S^0_{1,0}$ uniformly in $t$ for $i=1,2.$  Hence,
\[-i\paren{\scr{L}\Psi_{q_1}-\Psi_{q_1}\scr{L}}=\Psi_{\{h_t,q_1\}}+E^0_7,\]
with $E^0_7$ an operator of order 0.

It follows that \[\{h_t,q_1\}=\paren{\diff{h_t}{\xi_i}\theta_R(\xi)
  \diff{\gamma_{\mu_0}}{x_i} -
  \diff{h_t}{x_i}\theta_R(\xi)\diff{\gamma_{\mu_0}}{\xi_i} - \diff{h_t}{x_i}\diff{\theta_R}{\xi_i}\gamma_{\mu_0}}
e^{\theta_R(\xi)\gamma_{\mu_0}},\]
where the last term in the parentheses is in $S^{-\infty}_{1,0}$.
Therefore \[-i\paren{\scr{L}\Psi_{q_1}-\Psi_{q_1}\scr{L}} =
-\Psi_{\theta_RH_{h_t}\gamma_{\mu_o}}\Psi_{q_1} + E^0_8,\]  with
$E_8^0$ of order 0.  In the same way
\[-i\paren{-\scr{L}\Psi_{q_2}+\Psi_{q_2}\scr{L}} =
-\Psi_{\theta_RH_{h_t}\gamma_{\mu_o}}\Psi_{q_2} + E^0_9.\]

Thus our system (after absorbing errors into $\tilde C$) looks like

\begin{equation*}
\left\{\begin{aligned}
&\begin{aligned}\del_t \vec{\alpha} =& -\epsilon\Delta^2 I \vec{\alpha}+\epsilon
R^3\alpha +
\begin{pmatrix}
  -\Psi_{\theta_R(\xi)H_{h_t}\gamma_{\mu_0}} & 0\\
 0 &  -\Psi_{\theta_R(\xi)H_{h_t}\gamma_{\mu_0}}
\end{pmatrix}
\vec \alpha \\
&\qquad +iH\vec \alpha +B_d\vec \alpha +\tilde{C}\vec \alpha +\vec G,
\end{aligned} \\
&\vec{\alpha}(x,0)=\vec{\alpha}_0(x).\end{aligned}\right.
\end{equation*}

We now proceed to derive energy estimates for $\alpha.$ Consider
\[
\begin{aligned}
\del_t\ip{\vec\alpha}{\vec\alpha} &= \ip{\del_t\vec\alpha}{\vec\alpha}
+\ip{\vec\alpha}{\del_t\vec\alpha}
\\&=\ip{-\epsilon\Delta^2I\vec\alpha}{\vec\alpha} +
\ip{\vec\alpha}{-\epsilon\Delta^2I\vec\alpha} + \ip{\epsilon
  R^3\vec\alpha}{\vec\alpha} + \ip{\vec\alpha}{\epsilon R^3\vec\alpha}
+ \\ &\ip{iH\vec\alpha}{\vec\alpha} +\ip{\vec\alpha}{iH\vec\alpha} +
\ip{B_d\vec\alpha}{\vec\alpha} + \ip{\vec\alpha}{B_d\vec\alpha} +\\&
\ip{C\vec\alpha}{\vec\alpha} + \ip{\vec\alpha}{C\vec\alpha} +
\ip{\vec G}{\vec\alpha} + \ip{\vec\alpha}{\vec G} + \\
& \ip{
\begin{pmatrix}
  -\Psi_{\theta_R(\xi)H_{h_t}\gamma_{\mu_0}} & 0\\
 0 &  -\Psi_{\theta_R(\xi)H_{h_t}\gamma_{\mu_0}}
\end{pmatrix}\vec\alpha}{\vec\alpha}
 +\\& \ip{\vec\alpha}{\begin{pmatrix}
  -\Psi_{\theta_R(\xi)H_{h_t}\gamma_{\mu_0}} & 0\\
 0 &  -\Psi_{\theta_R(\xi)H_{h_t}\gamma_{\mu_0}}
\end{pmatrix}\vec\alpha}.
\end{aligned}
\]

In the first two terms we have we have that
$-\epsilon\ip{\Delta^2I\vec\alpha}{\vec\alpha} -
\epsilon\ip{\vec\alpha}{\Delta^2I\vec\alpha} =
-2\epsilon\ip{\Delta I\vec\alpha}{\Delta I\vec\alpha} =
-2\epsilon\|\Delta\vec\alpha\|_2^2$.
\pagebreak[1]
The second two terms contribute
$\ip{\epsilon R^3\vec\alpha}{\vec\alpha} +
\ip{\vec\alpha}{\epsilon R^3\vec\alpha} =
2\epsilon\Re\ip{R^3\vec\alpha}{\vec\alpha} =
2\epsilon\Re\ip{J^{-3/2}IR^3\vec\alpha}{J^{3/2}I\vec\alpha}$.

As both $J^{3/2}$ and $J^{-3/2}IR^3$ are operators of order $3/2$ we
can bound this by $C\|\vec\alpha\|^2_{H^{3/2}}.$ Now by interpolation we
have that $\|\vec\alpha\|^2_{H^{3/2}}\leq \eta_0\norm{\Delta I\vec\alpha}_2^2
+ \frac{2}{\eta_0}\|\vec\alpha\|^2_2.$

Hence \[\abs{2\epsilon\Re\ip{R^3\vec\alpha}{\vec\alpha}}\leq
2\epsilon C\eta_0\|\Delta I\vec\alpha\|_2^2 +
\frac{4\epsilon C}{\eta_0}\|\vec\alpha\|_2^2.\]  By setting
$\eta_0=1/(2C)$ we can absorb the first term into
$-2\epsilon\|\Delta\vec\alpha\|^2_2$ to get the first four terms are bounded
by $-\epsilon \|\Delta I\vec\alpha\|^2_2+8\epsilon C^2\|\vec\alpha\|^2_2$.

We now turn our attention to first order terms.  That is, the last two
terms and the terms involving $B_d$.  Consider the matrix of symbols

\[F:=
\begin{pmatrix}
  -\theta_R(\xi)H_{h_t}\gamma_{\mu_0}(x,\xi)+b_1(x,t,\xi) & 0 \\
  0 &  -\theta_R(\xi)H_{h_t}\gamma_{\mu_0}(x,\xi)+\overline{b_1(x,t,-\xi)}
\end{pmatrix}.
\]

We will need to control $F+F^*$ to apply the vector valued G\aa rding's
 inequality. Let $\phi_{Q_\mu^*}$ be a smooth cut off to the
double of $Q_\mu$ and let $\chi_{Q_\mu^*}=\phi^2_{Q_\mu^*}$.  By our construction of $\gamma_{\mu_0}$ we have
that
\[
\begin{aligned}
-\tilde{C}_0\theta_R(\xi)H_{h_t}\gamma_{\mu_0}\leq&
\tilde{C}_0\theta_R(\xi)\Bigg(-\frac{1}{C_0}\frac{\abs{\xi}}{\jap{x-x_{\mu_0}}^2}+C_0
 \\& \qquad\qquad\qquad\qquad
-\sum_{\mu\in\Z^n}\beta_\mu^0\paren{\frac{1}{C_0}\frac{\abs{\xi}}{\jap{x-x_\mu}^2}-C_0}\Bigg)\\
\leq &\tilde{C}_0\theta_R(\xi)\paren{-C_0'\abs{\xi}\chi_{Q_{\mu_0}^*} -
  \sum_{\mu\in\Z^n}\beta_\mu^0 C_0'\abs{\xi}\chi_{Q_{\mu}^*}} + \tilde{C}_0C_0''\\
\leq &
\tilde{C}_0\theta_R(\xi)\paren{-C_0'\abs{\xi}\chi_{Q_{\mu_0}^*}-
C_0'\abs{\Re b_1(x,0,\xi)}} + \tilde{C}_0C_0''.
\end{aligned}
\]

Here we choose $\tilde C_0$ so that $\tilde C_0C_0'\geq 2$.
Now we have that
\begin{multline*}
  -\theta_R(\xi)H_{h_t}\gamma_{\mu_0}+2\Re b_1(x,t,\xi) =\\
   \paren{\theta_R(\xi)H_{h_t}\gamma_{\mu_0}+2\Re b_1(x,0,\xi)} +
  2\Re \int_0^t\del_tb_1(x,s,\xi)\,ds\\
  \leq -\tilde C_0'\theta_R(\xi)\abs{\xi}\chi_{Q_{\mu_0}^*}+\tilde C_0''+
  2\int_0^t\paren{\sum_{\mu\in
      Z^n} \tilde\beta_\mu(s)\varphi_\mu(x,s,\xi)}\,ds.
\end{multline*}
Let $p(x,t,\xi)= 2\int_0^t\paren{\sum_{\mu\in
      Z^n} \tilde\beta_\mu(s)\varphi_\mu(x,s,\xi)}\,ds.$
Apply the vector valued G\aa rding inequality (see
\cite{HK1981}, \cite{MT1991}) to get

\begin{multline*}
\Re\ip{
  \begin{pmatrix}
    -\Psi_{\theta_R(\xi)H_{h_t}\gamma_{\mu_0}}+b_1(x,t,D) & 0\\
     0 & -\Psi_{\theta_R(\xi)H_{h_t}\gamma_{\mu_0}}+\overline{b_1(x,t,D)}
  \end{pmatrix}
\vec\alpha}{\vec\alpha}\\
\leq C\norm{\vec\alpha}_2^2-\Re\ip{
  \begin{pmatrix}
    \Psi_{\theta_R \abs{\xi}\chi_{Q_{\mu_0}^*}} & 0 \\
    0 & \Psi_{\theta_R \abs{\xi}\chi_{Q_{\mu_0}^*}}
  \end{pmatrix}
\vec\alpha}{\vec\alpha}+\\\Re\ip{
  \begin{pmatrix}
    \Psi_{p(x,t,\xi)} & 0 \\
    0 & \Psi_{p(x,t,-\xi)}
  \end{pmatrix}
\vec\alpha}{\vec\alpha}.
\end{multline*}

We denote this last term by $\Re\ip{E_M\vec\alpha}{\vec\alpha}$. The
symbol of the  operator \\
$\Psi_{\theta_R(\xi)\abs{\xi}\chi_{Q_{\mu_0}^*}}-J^{1/2}\chi_{Q_{\mu_0}^*}J^{1/2}$
is in $S^0_{1,0}$.  Hence we have
\begin{multline*}\Re\ip{
  \begin{pmatrix}
    \Psi_{\tilde C_0\theta_R \abs{\xi}\chi_{Q_{\mu_0}^*}} & 0 \\
    0 & \Psi_{\tilde C_0 x\theta_R \abs{\xi}\chi_{Q_{\mu_0}^*}}
  \end{pmatrix}
  \vec\alpha}{\vec\alpha} \\ \geq
\ip{\phi_{Q_{\mu_0}^*}J^{1/2}I\vec\alpha}{\phi_{Q_{\mu_0}^*}J^{1/2}I\vec\alpha}
-C\norm{\vec\alpha}_{L^2}^2 \\\geq\;
\|\phi_{Q_{\mu_0}^*}J^{1/2}I\vec\alpha\|_{L^2(Q_{\mu_0})}-C\norm{\vec\alpha}_{L^2}^2.
\end{multline*}

To handle the terms involving $H$ notice that
$\int \scr{L}\alpha_1\alpha_1=\int\alpha_1\scr{L}\alpha_1$, and hence
$i\ip{H\vec\alpha}{\vec\alpha}-i\ip{\vec\alpha}{H\vec\alpha}=0.$
For the terms involving $\tilde C$ we use Cauchy-Schwartz inequality,
$\abs{\ip{\tilde C \vec\alpha}{\vec\alpha}}\leq
\norm{\tilde C\vec\alpha}_2\norm{\vec\alpha}_2\leq
C\norm{\vec\alpha}_2^2$.

Putting all this together we see that
$$\frac{d}{dt}\norm{\vec\alpha}_2^2\leq
-\epsilon\|\Delta I\vec\alpha\|_2^2+C\norm{\vec\alpha}_2^2-\norm{J^{1/2}I\vec\alpha}_{L^2(Q_{\mu_0})}^2 +
2\abs{\Re\ip{\vec\alpha}{\vec G}}+\Re\ip{E_M\vec\alpha}{\vec\alpha}.$$
Integrating in time we find that
\begin{multline*}\norm{\vec\alpha(t)}_2^2 +
\norm{\phi_{Q_{\mu_0}^*}J^{1/2}I\vec\alpha}_{L^2(Q_{\mu_0}\times [0,t])}^2\leq \norm{\vec\alpha(0)}_2^2
+C\int_0^t\norm{\vec\alpha}_2^2\,ds +\\ 2\int_0^t\abs{\Re\ip{\vec\alpha}{\vec G}}\,ds +
\int_0^t\Re\ip{E_M\vec\alpha}{\vec\alpha}\,ds.\end{multline*}

In order to handle the terms
$\int_0^t\Re\ip{E_M\vec\alpha}{\vec\alpha}\,ds$, we have that

\begin{multline*}
\int_0^t \int_0^s \int\sum_{\mu\in\Z^n}\tilde\beta_\mu(r)\Psi_{\varphi_\mu(x,r,\xi)}\alpha_1(s)\overline{\alpha_1(s)}\,dx\,dr\,ds=\\
\int_0^t \sum_{\mu\in\Z^n}\tilde\beta_\mu(r)
\int_r^t\int\Psi_{\varphi_\mu(x,r,\xi)}
\alpha_1(s)\overline{\alpha_1(s)}\,dx\,ds\,dr.
\end{multline*}
Our estimates on $\varphi_\mu(x,s,\xi)$ give us that
$\int\Psi_{\varphi_\mu(s,x,\xi)}\alpha_1\overline{\alpha_1}\,dx\leq
\norm{J^{1/2}\alpha_1}_{L^2(Q_\mu^*)}^2+C\norm{\alpha_1}_2^2$.
Thus we have $$\abs{\int_0^t \ip{E_M\vec\alpha}{\vec\alpha}}\leq
Ct\sup_{\mu\in \Z^n}\norm{J^{1/2}I\vec\alpha}_{L^2([0,t]\times
  Q_\mu)}+Ct\sup_{0\leq s \leq t}\norm{\vec\alpha}_2^2$$

Hence, after taking a supremum over $0\leq t \leq T$, we arrive at
\begin{multline*}\sup_{0\leq t\leq T}\norm{\vec\alpha(t)}_2^2 +
\norm{\phi_{Q_{\mu_0}^*}J^{1/2}I\vec\alpha}_{L^2(Q_{\mu_0}\times [0,T])}^2\leq
\norm{\vec\alpha(0)}_2^2 
+CT\sup_{0\leq t\leq T}\norm{\vec\alpha}_2^2 + \\
2\int_0^T\abs{\Re\ip{\vec\alpha}{\vec G}}\,ds + CT\sup_{\mu\in
  \Z^n}\norm{J^{1/2}I\vec\alpha}_{L^2([0,T]\times Q_\mu)}.
\end{multline*}
By choosing $T$ small we may make $CT\sup_{0\leq t\leq
  T}\norm{\vec\alpha}_2^2\leq \frac{1}{2} \sup_{0\leq t\leq
  T}\norm{\vec\alpha}_2^2$ and absorb this term into the left hand
side.  In this way we get

\begin{multline}\label{basicest}
\sup_{0\leq t\leq T}\norm{\vec\alpha(t)}_2^2 +
\norm{J^{1/2}I\vec\alpha}_{L^2(Q_{\mu_0}\times [0,T])}^2\leq
2\norm{\vec\alpha(0)}_2^2 + \\
4\int_0^T\abs{\Re\ip{\vec\alpha}{\vec G}}\,ds +
CT\sup_{\mu\in\Z^n}\norm{J^{1/2}I\vec\alpha}_{L^2([0,T]\times Q_\mu)}.
\end{multline}

In terms of $\vec z$ our estimates now tell us
\begin{multline*}\sup_{0\leq t\leq T}\norm{\Psi_M\vec z(t)}_2^2 +
\norm{\phi_{Q_{\mu_0}^*}J^{1/2}I\Psi_M\vec z}_{L^2(Q_{\mu_0}\times [0,T])}^2\leq
2\norm{\Psi_M\vec z(0)}_2^2 + \\
4\int_0^T\abs{\Re\ip{\Psi_M\vec z}{\vec G}}\,ds +
CT\sup_{\mu\in\Z^n}\norm{J^{1/2}I\Psi_M\vec z}_{L^2([0,T]\times
  Q_\mu)}.
\end{multline*}

But notice that,
$J^{1/2}I\Psi_M=\Phi_MJ^{1/2}I+E$,
where $E$ is of order 0.  Hence

\begin{multline*}\int_0^T \norm{\phi_{Q_{\mu_0}^*}J^{1/2}I\Psi_M\vec
  z}_{L^2(Q_{\mu_0})}^2\,dt\geq\\
  \int_0^TC_0\norm{J^{1/2}I\vec z}_{L^2(Q_{\mu_0})}^2\,dt - CT\sup_{0\leq t\leq T}\norm{\vec
  z}_2^2.\end{multline*}  Thus, possibly after another restriction in
$T$, we arrive at
\begin{multline*}\sup_{0\leq t\leq T}\norm{\vec z(t)}_2^2 +
\norm{J^{1/2}I\vec z}_{L^2(Q_{\mu_0}\times [0,T])}^2\leq
C_0\Bigg(\norm{\vec z(0)}_2^2 +
\int_0^T\abs{\Re\ip{\Psi_M\vec z}{\vec G}}\,ds \\
+ CT\sup_{\mu\in\Z^n}\norm{J^{1/2}I\vec z}_{L^2([0,T]\times
  Q_\mu)}\Bigg).
\end{multline*}

Now estimate the term involving $\vec G$.

\begin{multline*}\label{L1intest}
\int_0^T\abs{\Re\ip{\Psi_M\vec z}{\vec G}}\,ds\leq
\int_0^TC_0\norm{\vec z}_2\norm{\vec G}_2\,dt\leq
C_0\sup_{0\leq s\leq t}\norm{\vec z(s)}_2\norm{G}_{L^1_t L^2_x}\\\leq
C_0\eta\sup_{0\leq s \leq t}\norm{\vec z(t)}_2^2+\frac{C_0}{\eta}\norm{G}_{L^1_tL^2_x}^2
\end{multline*}

Choosing $\eta$ small enough to absorb the term
involving $z$ to the left hand side.  Our estimate now is of the form
\begin{multline*}\sup_{0\leq t\leq T}\norm{\vec z(t)}_2^2 +
\norm{J^{1/2}I\vec z}_{L^2(Q_{\mu_0}\times [0,T])}^2\leq
C_0\Bigg(\norm{\vec z(0)}_2^2 + \norm{G}^2_{L^1_tL^2_x}+  \\
CT\sup_{\mu\in\Z^n}\norm{J^{1/2}I\vec z}_{L^2([0,T]\times Q_\mu)}\Bigg).\end{multline*}

Finally to get Theorem~\ref{aplin} we take a supremum in $\mu_0$, then
after a suitable restriction in $T$ we may absorb
$CT\sup_{\mu\in\Z^n}\norm{J^{1/2}I\vec z}_{L^2([0,T]\times Q_\mu)}$
into the left hand side.  Keeping in mind that estimates for $z$ will
imply the corresponding estimates in $u.$
\end{proof}

We now turn to a perturbation result. It is possible to weaken
the non-trapping condition \linref{LNoTrap}.  It is enough to assume
that the second order coefficients are ``close'' to coefficients that
are non-trapping.

To this end, we again consider equation \ref{LinGenS}.  We still
assume that the coefficients satisfy conditions
\linref{LRegularity}--\linref{LFirstOrder}.  Instead of
\linref{LNoTrap}, suppose that $A(x,t)=A_0(x,t)+\eta A_1(x,t)$.
Assume that $h_0(x,\xi)=\ip{A_0(x,0)\xi}{\xi}$ satisfies the
non-trapping condition \linref{LNoTrap}.  In addition, assume that
$\abs{A_1(x,t)}+\abs{\grad_xA_1(x,t)}\leq\frac{C}{\jap{x}^2}$
uniformly in $t$.  Then for $\eta$ sufficiently small, depending on
$C$ and $C_0$,  the conclusion of Theorem~\ref{aplin} holds.

To see this, notice that we only use the non-trapping condition
\linref{LNoTrap} in the proof of Lemma~\ref{timedepdoi}.  We
will now prove this lemma in the under these slightly more general
assumptions.

\begin{lem}
  Let $h_0(x,\xi)$ be as above and let $p_\mu$ be the Doi
  symbol corresponding to $h_0$ centered at cube $Q_\mu$.  Then there
  exists a $T_1=T_1(C,C_0)$ so that, uniformly for all $t<T_1$, the time varying
  symbol $h_t(x,\xi)=a_{jk}(x,t)\xi_j\xi_k$ satisfies
  \[H_{h_t}p_\mu(x,\xi)\geq \frac{1}{C_0}\frac{\abs{\xi}}{\jap{x-x_\mu}^2}-C_0.\]
\end{lem}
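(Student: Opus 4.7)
The plan is to mimic the proof of Lemma~\ref{timedepdoi}: treat $h_t - h_0$ as a small perturbation whose Poisson bracket against $p_\mu$ can be absorbed into the positive bump term $\jap{x}^{-2}\abs{\xi}$ afforded by Remark~\ref{doibump}. The key observation is the algebraic decomposition
\[A(x,t) - A_0(x,0) = \paren{A(x,t) - A(x,0)} + \eta A_1(x,0),\]
valid because $A(x,0) - A_0(x,0) = \eta A_1(x,0)$. The first summand is controlled by the fundamental theorem of calculus applied to $\del_tA$ and $\del_t\del_{x_i}A$, using the bounds from \linref{LFlat}; this gives $\abs{A(x,t)-A(x,0)} + \abs{\grad_x(A(x,t)-A(x,0))} \leq Ct\jap{x}^{-2}$. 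The second summand is handled directly by the perturbation hypothesis $\abs{A_1(x,t)} + \abs{\grad_xA_1(x,t)}\leq C\jap{x}^{-2}$, evaluated at $t=0$.

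Next, I would expand $H_{h_t - h_0} p_\mu$ exactly as in Lemmas~\ref{uncentereddoi1} and \ref{timedepdoi}. Using that $p_\mu \in S^0_{1,0}$ with seminorms bounded in terms of $C_0$, the term $-\del_{x_i}\paren{A(x,t)-A_0(x,0)}_{jk}\xi_j\xi_k\, \del_{\xi_i}p_\mu$ produces $\abs{\xi}^2\jap{\xi}^{-1} \lesssim \abs{\xi}$ multiplied against the $x$-gradient bound on the coefficient difference, while the companion term $2\paren{A(x,t)-A_0(x,0)}_{ik}\xi_k\, \del_{x_i}p_\mu$ produces $\abs{\xi}$ against the matrix bound itself. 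Both match the $\jap{x}^{-2}$ decay, yielding
\[\abs{H_{h_t - h_0}p_\mu(x,\xi)} \leq C'(t+\eta)\frac{\abs{\xi}}{\jap{x}^2},\]
with $C'$ depending only on $C$ and $C_0$.

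Finally, invoking Remark~\ref{doibump} for the non-trapping symbol $h_0$ supplies
\[H_{h_0}p_\mu \geq \frac{1}{C_0}\paren{\frac{\abs{\xi}}{\jap{x-x_\mu}^2} + \frac{\abs{\xi}}{\jap{x}^2}} - C_0,\]
so combining the two contributions gives
\[H_{h_t}p_\mu \geq \frac{1}{C_0}\frac{\abs{\xi}}{\jap{x-x_\mu}^2} + \paren{\frac{1}{C_0} - C'(t+\eta)}\frac{\abs{\xi}}{\jap{x}^2} - C_0.\]
Choosing $T_1 = T_1(C, C_0)$ and the smallness threshold for $\eta$ so that $C'(T_1 + \eta) \leq 1/C_0$, the second term is non-negative and the claimed inequality follows.

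I expect no new obstacle beyond those already surmounted in Lemma~\ref{timedepdoi}. The only additional feature is the static $\eta A_1(x,0)$ piece, but its hypotheses supply precisely the $\jap{x}^{-2}$ decay needed for the bump term to absorb it; the $t$-smallness mechanism for the time drift of $A$ and the $\eta$-smallness mechanism for the new perturbation are identical in structure. The only delicate point worth watching is that $C'$ must remain independent of $\mu$, which follows because the seminorms of $p_\mu$ in $S^0_{1,0}$ are $\mu$-uniform by \linref{LNoTrap}.
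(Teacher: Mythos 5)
Your proposal is correct, and it follows the same overall strategy as the paper: expand $H_{h_t-h_0}p_\mu$, bound the time drift by $\jap{x}^{-2}\abs{\xi}$ via \linref{LFlat} and the fundamental theorem of calculus, bound the $\eta$-perturbation the same way via the decay hypothesis on $A_1$, and absorb both into the positive bump term $\frac{1}{C_0}\frac{\abs{\xi}}{\jap{x}^2}$ provided by Remark~\ref{doibump} and condition \linref{LNoTrap}. The only genuine divergence from the paper's proof is the choice of algebraic decomposition. The paper splits $A(x,t) - A_0(x,0) = \bigl(A_0(x,t)-A_0(x,0)\bigr) + \eta A_1(x,t)$ and then applies \linref{LFlat}-type bounds to the $A_0$ drift, whereas you split $A(x,t)-A_0(x,0) = \bigl(A(x,t)-A(x,0)\bigr) + \eta A_1(x,0)$. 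Your split is arguably tighter with respect to the stated hypotheses: \linref{LFlat} is assumed for the full matrix $A(x,t)$, and the decay hypothesis $\abs{A_1}+\abs{\grad_xA_1}\lesssim\jap{x}^{-2}$ says nothing about $\del_tA_1$; the paper's decomposition therefore silently requires $\del_tA_0$ and $\del_t\grad_xA_0$ to enjoy $\jap{x}^{-2}$ decay (which holds only if $\del_tA_1$ is also suitably bounded), while yours needs only what has been assumed verbatim. Both routes yield the same conclusion with the same $(t+\eta)$-smallness mechanism, and your observation about $\mu$-uniformity of the seminorms of $p_\mu$ (coming from \linref{LNoTrap}) is the right thing to check.
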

\begin{proof}
  To facilitate calculations we use the following notations for the
  matrix entries $(a^0_{jk}(x,t))_{j,k=1\ldots n}:=A_0(x,t)$ and
  $(a^1_{jk}(x,t))_{j,k=1\ldots n}:=A_1(x,t)$.  It is also convenient
  to denote
  $k_0(x,t,\xi)=\ip{A_0(x,t)\xi}{\xi}$, and
  $k_1(x,t,\xi)=\ip{A_1(x,t)\xi}{\xi}$.  Proceeding with our
  calculation as before we have
\[
\begin{aligned}
H_{h_t}p_\mu&=\sum_{i=1}^n \diff{h_0}{\xi_i}\diff{p_\mu}{x_i} -
\diff{h_0}{x_i}\diff{p_\mu}{\xi_i}+ \paren{\diff{k_0}{\xi_i}
-\diff{h_0}{\xi_i}+\eta\diff{k_1}{\xi_1}}\diff{p_\mu}{x_i}\\
&\qquad\qquad\qquad\qquad
  - \paren{\diff{k_0}{x_i}-\diff{h_0}{x_i}
    +\eta\diff{k_1}{x_i}}\diff{p_\mu}{\xi_i}\\
&=
  H_{h_0}p_\mu+
  \sum_{i=1}^n 2\paren{a^0_{ik}(x,t)-a^0_{ik}(x,0)+\eta a^1_{ik}(x,t)}\xi_k\diff{p_\mu}{x_i}\\
  &\qquad\qquad\qquad\qquad-\paren{\diff{a^0_{jk}(x,t)}{x_i}
  -\diff{a^0_{jk}(x,0)}{x_i}+\eta\diff{a^1_{jk}(x,t)}{x_i}}\xi_j\xi_k\diff{p_\mu}{\xi_i}.
\end{aligned}
\]

As before the asymptotic flatness condition \linref{LFlat} there is a
$T_1=T_1(C,C_0)$ so that if $t<T_1$ we have that we have that
\begin{multline*}
\abs{\sum_{i=1}^n 2\paren{a^0_{ik}(x,t)-a^0_{ik}(x,0)}\xi_k\diff{p_\mu}{x_i}
\right.\\\left. - \paren{\diff{a^0_{jk}(x,t)}{x_i}-\diff{a^0_{jk}(x,0)}{x_i}}
 \xi_j\xi_k\diff{p_\mu}{\xi_i}}\leq \frac{1}{2C_0}\frac{\abs{\xi}}{\jap{x}^2}.
\end{multline*}

Our conditions on $A_1$, together with the control of the  seminorms
of $p_\mu$ give that

\[ \eta\abs{\sum_{i=1}^na^1_{ik}(x,t)\xi_k\diff{p_\mu}{x_i}+\diff{a^1_{jk}(x,0)}{x_i}
 \xi_j\xi_k\diff{p_\mu}{\xi_i}}\leq \frac{\eta C \abs{\xi}}{\jap{x}^2}\]

We choose $\eta$ so that $\frac{\eta C
  \abs{\xi}}{\jap{x}^2}\leq\frac{1}{2C_1}\frac{\abs{\xi}}{\jap{x}^2}$. Now
using the assumption that $h^0$ satisfies \linref{LNoTrap} we get that
\[H_{h_t}p_\mu\geq H_{h_0} p_\mu-\frac{1}{C_1}\frac{\abs{\xi}}{\jap{x}^2}\geq
\frac{1}{C_1}\frac{\abs{\xi}}{\jap{x-x_\mu}^2}-C_1.\]

Using the same version of Doi's lemma as before.
\end{proof}

%%% Local Variables:
%%% mode: latex
%%% TeX-master: "qls"
%%% End:

\section{Nonlinear Results}\label{sec:nonlinear}

In this section we approach \eqref{theproblem} by the artificial
viscosity method.  Hence, we are interested in the system
\begin{equation}\label{NonLinGenS1}
\left\{\begin{aligned}
&
\begin{aligned}\del_t u =&-\epsilon\Delta^2u+ia_{jk}(\XuDu)\del_{x_j}\del_{x_k}u
\\& +   \vec b_1(\XuDu)\cdot\grad u
 +\vec{b}_2(\XuDu)\cdot\grad \bar u\\ & +c_1(\Xu)u +
c_2(\Xu)\bar u +f(x,t),
\end{aligned}\\
&u(x,0)=u_0(x).
\end{aligned}\right.
\end{equation}

We assume the coefficients satisfy the conditions set out in the
introduction.  We take $s>N+n+4$ with $N$ as in
\linref{LRegularity} and even.  We take $\tilde N> s+2$.

We abbreviate our system to
\begin{equation*}
\left\{
    \begin{aligned}
      &\del_t u=-\epsilon\Delta^2u+\scr{L}(u)u+f(x,t)\\
      &u(x,0)=u_0(x),
    \end{aligned}
\right.
\end{equation*}
where
\begin{multline*}\scr{L}(u)v=ia_{jk}(\XuDu)\del_{x_j}\del_{x_k}v +
\vec b_1(\XuDu)\cdot\grad v + \\\vec{b}_2(\XuDu)\cdot\grad \bar v +
c_1(\Xu)v + c_2(\Xu)\bar v.\end{multline*}

\begin{thm}\label{contractmapping}
  Take $s> n+3$.  For $v_0\in H^s$ and
$f\in L^\infty([0,1];H^s),$ define $\lambda=\|v_0\|_s +
\int_0^1\|f(t)\|_{H^s}\,dt$.  Define
$$X_{M_0,T}=\{v:\R^n\times[0,T]\to \C \mid v(x,0)=v_0, v\in
  C([0,T];H^s), \norm{v}_{L^\infty_tH^s_x}\leq M_0\}.$$  If
  $\lambda<M_0/2$, then there exists $T_\epsilon$,  $1>T_\epsilon>0$, so
that equation \eqref{NonLinGenS1} with initial data $v_0$ has a
unique solution $v^\epsilon\in X_{M_0,T^\epsilon}.$
\end{thm}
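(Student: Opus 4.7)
The plan is to apply a Banach fixed-point argument to the solution map $\Phi$ defined as follows: given $v \in X_{M_0,T}$, let $u = \Phi(v)$ solve the \emph{linearized} problem
\[
\partial_t u = -\epsilon\Delta^2 u + \mathcal{L}(v) u + f(x,t), \qquad u(x,0)=v_0(x).
\]
Because $v \in L^\infty_t H^s_x$ with $s > n+3$, Sobolev embedding gives $v,\bar v,\nabla v,\nabla\bar v \in L^\infty_t C^1_b$ with bounds depending only on $M_0$, so by \ref{NLRegularity} the coefficients of $\mathcal{L}(v)$ are bounded and sufficiently smooth in $x$, uniformly in $t$. For each fixed $\epsilon > 0$ the fourth-order dissipation $-\epsilon\Delta^2$ generates an analytic semigroup on $H^s$, and $\mathcal{L}(v)$ is a second-order perturbation with bounded coefficients, so standard semigroup/Duhamel theory produces a unique $u \in C([0,T];H^s)$.

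To verify $\Phi(X_{M_0,T_\epsilon}) \subseteq X_{M_0,T_\epsilon}$, I would derive an $H^s$ energy estimate by applying $J^s$, pairing with $J^s u$, and using
\[
\frac{d}{dt}\|u\|_{H^s}^2 + 2\epsilon\|\Delta J^s u\|_2^2 = 2\mathrm{Re}\langle J^s \mathcal{L}(v)u, J^s u\rangle + 2\mathrm{Re}\langle J^s f, J^s u\rangle.
\]
The reality and symmetry of $A(x,t,\cdot)$ from \ref{NLReal2ndOrder}--\ref{NLSymmetric} let one integrate by parts to split the principal commutator $[J^s,a_{jk}]\partial_{x_j}\partial_{x_k}u$ into a piece absorbed by Moser/Kato--Ponce estimates and a piece absorbed into the parabolic dissipation term. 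The remaining first- and zeroth-order contributions, as well as those from $b_j$ and $c_j$, are controlled in the same way, yielding a differential inequality of the form $\tfrac{d}{dt}\|u\|_{H^s}^2 \leq P(M_0)\|u\|_{H^s}^2 + P(M_0)\|f\|_{H^s}^2$. Gronwall then gives
\[
\|u(t)\|_{H^s}^2 \leq e^{tP(M_0)}\Bigl(\|v_0\|_{H^s}^2 + \Bigl(\int_0^t\|f(s)\|_{H^s}\,ds\Bigr)^2\Bigr),
\]
and since $\lambda < M_0/2$, choosing $T_\epsilon$ sufficiently small (depending on $\epsilon$ and $M_0$) forces $\|u\|_{L^\infty_{T_\epsilon}H^s} \leq M_0$.

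For the contraction step I would take $v_1,v_2 \in X_{M_0,T_\epsilon}$, set $u_i=\Phi(v_i)$, and observe that $w=u_1-u_2$ satisfies
\[
\partial_t w = -\epsilon\Delta^2 w + \mathcal{L}(v_1) w + \bigl(\mathcal{L}(v_1)-\mathcal{L}(v_2)\bigr)u_2, \qquad w(x,0)=0.
\]
I would close this in a weaker norm, say $L^\infty_t H^{s-2}_x$, in which the closed ball $X_{M_0,T_\epsilon}$ remains complete by weak-$\ast$ compactness. The forcing is controlled by $C(M_0)\|v_1-v_2\|_{H^{s-1}}\|u_2\|_{H^s}$ via the mean value theorem applied to the coefficients in the variables $v,\bar v,\nabla v,\nabla\bar v$; an $H^{s-2}$ energy identity analogous to the one above (again using the dissipation to absorb the principal commutator) plus Gronwall yield
\[
\|w\|_{L^\infty_{T_\epsilon}H^{s-2}} \leq T_\epsilon\,C(\epsilon,M_0)\,\|v_1-v_2\|_{L^\infty_{T_\epsilon}H^{s-2}},
\]
and a further shrinking of $T_\epsilon$ produces a strict contraction, giving a unique fixed point $v^\epsilon \in X_{M_0,T_\epsilon}$.

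The main obstacle is the top-order energy estimate: the coefficient $a_{jk}(x,t,v,\bar v,\nabla v,\nabla\bar v)$ depends on $\nabla v$, which lies only in $H^{s-1}$, so a naive bound on $J^s a_{jk}$ demands $s+1$ derivatives on $v$. Here the parabolic smoothing from $-\epsilon\Delta^2$ provides an $L^2_t H^{s+2}_x$ gain that, combined with integration by parts against the real symmetric matrix $A$, absorbs this half-derivative loss. This compensation is not uniform in $\epsilon$, which is why $T_\epsilon$ produced by this step degenerates as $\epsilon \downarrow 0$; recovering an $\epsilon$-independent existence time is the role of the uniform linear theory of Theorem~\ref{aplin} in the subsequent limit-passage argument, not of the present proof.
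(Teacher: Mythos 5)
Your energy-estimate approach does not close as stated, and the sticking point is exactly the one you flag at the end. In your scheme $u=\Phi(v)$ solves the linear equation with coefficients frozen at $v$, so the commutator $[J^s,a_{jk}(x,t,v,\bar v,\nabla v,\nabla\bar v)]\partial_{x_j}\partial_{x_k} u$ produces, via any Kato--Ponce or Moser bound, a term of the form $\|J^s a_{jk}(\cdot,v,\nabla v,\cdot)\|_2\|\partial^2 u\|_\infty$, and bounding $\|J^s a_{jk}(\cdot,\nabla v,\cdot)\|_2$ via the chain rule requires control of $\|\nabla v\|_{H^s}=\|v\|_{H^{s+1}}$. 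The parabolic smoothing $\epsilon\|\Delta J^s u\|_{L^2_tL^2_x}^2$ that you invoke lives on the \emph{output} $u$ of the linear solve, not on the \emph{input} $v$, which is an arbitrary element of $X_{M_0,T}$ carrying only $L^\infty_tH^s_x$ regularity (the space as defined records no parabolic-gain norm). Interpolating $\|u\|_{H^{s+1}}$ between $\|u\|_{H^s}$ and $\|u\|_{H^{s+2}}$ does handle the other Kato--Ponce term $\|\nabla a\|_\infty\|u\|_{H^{s+1}}$, but it cannot touch the $\|v\|_{H^{s+1}}$ loss. So the claimed differential inequality for $\|u\|_{H^s}^2$ is not actually established.

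The paper sidesteps this by running the fixed point on the fully nonlinear Duhamel map $\Gamma v(t)=e^{-\epsilon t\Delta^2}v_0+\int_0^t e^{-\epsilon(t-t')\Delta^2}(\mathcal{L}(v)v+f)\,dt'$ rather than on a frozen-coefficient linear solve. It then splits a top-order multi-index $\alpha=\beta+\beta'$ with $\abs{\beta'}=2$ and $\abs{\beta}=s-2$, places $\partial_x^{\beta'}$ on the semigroup --- using that $\partial_x^{\beta'}e^{-\epsilon\tau\Delta^2}$ has $L^2\to L^2$ norm of order $(\epsilon\tau)^{-1/2}$, an integrable singularity --- and therefore only ever takes $s-2$ derivatives of $\mathcal{L}(v)v$. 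The chain rule then hits $a_{jk}(\cdot,\nabla v,\cdot)$ with at most $s-2$ derivatives, needing only $v\in H^{s-1}$ for the coefficient, comfortably inside $H^s$; Lemma~\ref{gammacontract} packages this. With that mechanism the contraction closes directly in $H^s$, so there is also no need for your high/low norm split and weak-$\ast$ compactness argument. If you wish to salvage the linearize-and-iterate scheme you would have to enlarge $X_{M_0,T}$ to record a parabolic gain (for example add $\epsilon^{1/2}\|v\|_{L^2_tH^{s+2}_x}\leq M_0$ to its definition), so that the coefficient function $v$ itself gains the missing derivative.
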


\begin{proof}

For $t<1$, consider the integral form the equation
\[\Gamma v(t)=e^{-\epsilon t\Delta^2}v_0 +
\int_0^t e^{-\epsilon (t-t')\Delta^2}\paren{\scr{L}(v)v(t')+f(\cdot,t')}\,dt'.\]

We show that $\Gamma$ is a contraction mapping on the space
$X_{M_0,T}$ after a suitable restriction of $T$.  So let $\alpha$ be a
multi-index such that $\abs{\alpha}=s$ and consider
\[\del_x^\alpha \Gamma v(t)=e^{-\epsilon t\Delta^2}\del_x^\alpha v_0 +
\int_0^t\del_x^\alpha \paren{e^{-\epsilon(t-t')\Delta^2}\scr{L}(v)v+f}(t')\,dt'.\]
Choose multi-indices $\beta$ and $\beta'$ so that $\abs{\beta'}=2$,
$\abs{\beta}=s-2$, and $\alpha=\beta+\beta'$.
\begin{multline*}
\del_x^\alpha \Gamma v(t)= e^{-\epsilon t\Delta^2}\del_x^\alpha v_0
+
\int_0^t\del_x^{\beta'}e^{-\epsilon(t-t')\Delta^2}
\del_x^\beta\paren{\scr{L}(v)v(t')}\,dt'\\
  + \int_0^te^{-\epsilon(t-t')\Delta^2}\del_x^\alpha f(\cdot,t')\,dt'.
\end{multline*}
Hence,
\begin{multline*}
\norm{\del_x^\alpha\Gamma v}_2 \leq   C\paren{\norm{\del_x^\alpha v_0}_2
  +\int_0^t\norm{\del_x^{\beta'}e^{-\epsilon
      (t-t)\Delta^2}\del_x^\beta\scr{L}(v)}_2\,dt' +
   \int_0^t\norm{\del_x^\alpha f(t')}_2\,dt'}  \\
\leq  C\paren{\norm{v_0}_{H^s} +
\int_0^t\frac{1}{(t-t')^{1/2}\epsilon^{1/2}}
\norm{\del_x^\beta\scr{L}(v)v(t')}_2\,dt'
+ \int_0^t\norm{f(t')}_{H^s}\,dt'}.
\end{multline*}

In order to proceed further we need to turn our attention to
$\del^\beta_x\scr{L}(v)v$.
\begin{lem}\label{gammacontract}  Let $u,v\in X_{M,T}$ and suppose
  that $\abs{\beta}=s-2$ for $s>n+3$, then there exists a $P\in \N$ so
  that $\norm{\del^\beta_x\scr{L}(u)v}_2\leq
  C\norm{v}_{H^s}\paren{1+\norm{u}_{H^s}+\norm{u}_{H^s}^P}$ with
  $0\leq t \leq T$ and $C=C(M,n,s)$.
\end{lem}
\begin{proof}
We estimate term by term,
\begin{multline*}
\del^\beta_x\scr{L}(u)v=\del_x\paren{a_{jk}(\XuDu)\del_{x_j}\del_{x_k}v}+
\del_x^{\beta}\paren{\vec b_1(\XuDu)\cdot\grad v}+\\
\del_x^\beta\paren{\vec{b}_2(\XuDu)\cdot\grad \bar v} +
\del_x^\beta \paren{c_1(\Xu)v} + \del_x^\beta\paren{c_2(\Xu)\bar v}.
\end{multline*}

We start with $c_1(\Xu)$.  Let $\tilde c_1(\Xu)=c_1(\Xu)-c_1(x,t,0,0)$
so that $\tilde c_1(x,t,0,0)=0.$ Then $\del_x^\beta\paren{c_1(\Xu)v} =
\del_x^\beta \paren{\tilde c_1(\Xu)v}
+ \del_x^\beta\paren{c(x,t,0,0)v}$, and the $H^s$ norm of the second
term is clearly bounded by $C\norm{v}_{H^s}$ where $C$ depends on
$c_1$ and $\beta$.  We have,
\[\norm{\del_x^\beta\paren{\tilde c_1(\Xu)v}}_2 \leq
\sum_{\gamma+\delta=\beta}\norm{\del_x^\gamma\paren{\tilde
  c_1(\Xu)}\del_x^\delta v}_2.\]

If $\abs{\delta} < s-n/2$, then $\norm{\del_x^\delta v}_\infty\leq
C\|\del_x^\delta v\|_{H^{s-\abs{\delta}}}\leq C\norm{v}_{H^s}.$  It
follows that $\norm{\del_x^\gamma \tilde c_1(\Xu)\del_x^\delta
  v}_2\leq C\norm{v}_{H^s}\norm{\del_x^\gamma \tilde c_1(\Xu)}_2$.

As $\tilde c_1(x,t,0,0)=0$ and $\tilde c_1\in C_b^{\tilde N}$ it follows that
$\tilde c_1(\Xu)\in H^s$.  Hence $\norm{\del_x^\gamma\tilde c_1(\Xu)}_2\leq
\norm{c_1(\Xu)}_{H^s}\leq C\paren{\norm{u}_{H^s}+\norm{u}_{H^s}^p}$
for some $p\in \N$.

On the other hand, if $\abs{\delta} \geq s-n/2$, then we may not estimate
$\del_x^\delta v$ in $L^\infty$.  Instead we estimate the other factor
in $L^\infty$. Because $\abs{\gamma}+\abs{\delta}=\abs{\beta}=s-2$, we
have that $\abs{\gamma}\leq n/2-2$. Since $s>n-2$, we
have that $s-\abs{\gamma}>n/2$. Therefore,
\begin{multline*}
\norm{\del_x^\gamma\tilde c_1(\Xu)}_\infty \leq
C\norm{\del_x^\gamma\tilde c_1(\Xu)}_{H^{s-\abs{\gamma}}}\\\leq
C\norm{\tilde c_1(\Xu)}_{H^s}\leq
C\paren{\norm{u}_{H^s}+\norm{u}_{H^s}^p}
\end{multline*}
with $P$ as before.  The estimates for $c_2$ work in exactly the same way.

To estimate $\del_x^{\beta}\paren{\vec b_1(\XuDu)\cdot\grad u}$ note
that our assumptions imply\\ $b_1(x,t,0,0,\vec 0, \vec 0)=0.$
Again we have
\[\norm{\del_x^{\beta}\paren{\vec b_1(\XuDu)\cdot\grad v}}_2 \leq
\sum_{i=1}^n\sum_{\gamma+\delta=\beta}\norm{\del_x^\gamma
  b_{1,i}(\XuDu)\del_x^\delta\del_{x_i}v}_2.\]

In this case, if $\abs{\delta} < s-n/2-1$, then we proceed by estimating
$\del_x^\delta\del_{x_i}v$ in $L^\infty$.  If instead
$\abs{\delta}\geq s-n/2-1$, then we get that $\abs{\gamma}\leq n/2-1$.  We
have that $s>n+2$, so that $s-\abs{\gamma}>n/2+1$.
Hence we may estimate $\del_x^\gamma b_{1,i}(\XuDu)$ in $L^\infty$.
Again the estimates for the terms involving $b_2$ work in the same way
as the terms involving $b_1$.

The estimates for terms involving $a_{jk}$ are essentially identically
to those for $c_i$ and $b_i$ except that we need to require $s>n+3$.

\end{proof}

So we know that
\[\norm{\del_x^\alpha\Gamma v}_2 \leq
C\norm{v_0}_{H^s}+C_{M_0}\frac{2t^{1/2}}{\epsilon^{1/2}}M_0(1+M_0^p)+
\int_0^1\|f(t)\|_{H^s}\,dt\]
 and therefore
\[\norm{\Gamma v}_{H^s} \leq
C\norm{v_0}_{H^s}+C_{M_0}(\frac{2t^{1/2}}{\epsilon^{1/2}}+t)M_0(1+M_0^p)+
\int_0^1\|f(t)\|_{H^s}\,dt.\]
By choosing $T$ so that $C_{M_0}\paren{T^{1/2}/\epsilon^{1/2}+T}M_0(1+M_0^p)<\lambda$ then we get that
$\Gamma$ maps $X_{M_0,T}$ to itself.

Now take $u,v\in X_{M_0,T}$.  We wish to show that $\Gamma$ is a
contraction mapping.  We have that
\begin{multline*}\Gamma u(t) - \Gamma v(t) =
\int_0^te^{-\epsilon(t-t')\Delta^2}\paren{\scr{L}(u)u-\scr{L}(v)v}(t')\,dt'
=\\ \int_0^t e^{-\epsilon(t-t')\Delta^2}
\paren{\paren{\scr{L}(u)-\scr{L}(v)}u+\scr{L}(v)\paren{u-v}}\,dt'.
\end{multline*}
To estimate the terms that arise from $\scr{L}(v)\paren{u-v}$ we may
use Lemma \ref{gammacontract} to conclude that
\[\norm{\int_0^te^{-\epsilon(T-t)\Delta^2}\scr{L}(v)\paren{u-v}}_{H^s}\,dt\leq
C\norm{u-v}_{H^s}\paren{\frac{t^{1/2}}{\epsilon^{1/2}}+t}\paren{1+M_0+M_0^p}.\]
So by choosing $T_\epsilon < T$ this last expression is less then
$1/4\norm{u-v}_{H^s}.$

To estimate terms involving $\paren{\scr{L}(u)-\scr{L}(v)}u$ we
proceed in essentially the same way.  For example, to estimate
$\norm{\paren{c_1(\Xu)-c_1(\Xv)}u}_{H^s}$, rewrite the difference as follows
\begin{multline*}
c_1(x,t,u,\bar u)-c_1(x,t,v,\bar u)+c_1(x,t,v,\bar u) -
c_1(x,t,v,\bar v)=\\
\del_{z_1}c_1(x,t,su-(1-s)v,\bar u)u\paren{u-v}+
\del_{z_2}c_1(x,t,v,r\bar u+(1-r)\bar v)u\paren{\bar u-\bar v}.
\end{multline*}
We can see the above two terms are bounded by $C(M_0+M_0^p)\norm{u-v}_{H^s}$.

The other terms work similarly. We conclude that

\[\norm{\paren{\Gamma u-\Gamma v}(t)}_{H^s}\leq
C\paren{\frac{T^{1/2}}{\epsilon^{1/2}}+T}\paren{1+M_0+M_0^p}\norm{u-v}_{H^s}.\]

Choosing $T_\epsilon<T$ appropriately, we see $\Gamma$ is a
contraction mapping.
Hence there is a unique $v^\epsilon\in X_{T_\epsilon,M_0}$ such that
$v^\epsilon$ solves \eqref{NonLinGenS1} with initial data $v_0$.
\end{proof}

The following lemma is useful in verifying the conditions for our
linear estimates which help us get a uniform time of existence.

\begin{lem}\label{2ndorderassumps}
  Let $v\in X_{T,M_0}$ with $v(0)=u_0$, and suppose that $v$ satisfies
  \eqref{NonLinGenS1} then for $s>N+n/2+4$ the
  coefficients $a_{jk}(x,t,v,\bar v, \grad v, \grad \bar v)$ satisfies
  \linref{LRegularity},
  \linref{LElliptic}, \linref{LFlat}, and \linref{LNoTrap}.  Where the
  constant $C$ that appears in these conditions depends on $M_0$ and
  $C_1$ depends on $u_0$.
\end{lem}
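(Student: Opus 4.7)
The plan is to verify \linref{LRegularity}, \linref{LElliptic}, \linref{LFlat}, and \linref{LNoTrap} in turn. The common ingredient throughout is Sobolev embedding: since $s > N + n/2 + 4$ and $\|v\|_{L^\infty_t H^s_x} \leq M_0$, for every multi-index $\gamma$ with $|\gamma| \leq N + 4$ we have $\|\partial_x^\gamma v(\cdot,t)\|_{L^\infty_x} \leq C M_0$. In particular $(v,\bar v,\nabla v,\nabla \bar v)$ takes values uniformly in a ball $B^{2n+2}_{M}(0)$ with $M = M(M_0)$, making \linref{NLRegularity}--\linref{NLNoTrap} applicable with this $M$. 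Time derivatives of $v$ are handled by using the equation \eqref{NonLinGenS1} to express them in terms of spatial derivatives (of order up to four higher, because of the $-\epsilon\Delta^2$ term), which for $s$ large enough are still in $L^\infty$. Conditions \linref{LRegularity} and \linref{LElliptic} then follow more or less directly: the chain rule together with the $C_b^{\tilde N}$ regularity from \linref{NLRegularity} (recall $\tilde N > s+2$) gives the $C_b^{M_L}$ regularity required in \linref{LRegularity}, while reality, symmetry, and ellipticity transfer straight from \linref{NLReal2ndOrder}--\linref{NLElliptic}, with the ellipticity constant $\gamma_M$ replaced by $\gamma_{M(M_0)}$. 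At $t=0$ the composition only involves $u_0$, so the corresponding constants reduce to quantities $C_0$ depending only on $u_0$.

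The main obstacle is \linref{LFlat}, because chain-rule differentiation of $A(x,t,v,\bar v,\nabla v,\nabla \bar v)$ produces terms in which derivatives of $A$ fall on the $v$-arguments, and it is not obvious that the resulting expression retains the $\jap{x}^{-2}$ decay. The key observation is that \linref{NLFlat} provides the bound $C_M/\jap{x}^2$ not only for the $x$- and $t$-derivatives of $A(x,t,\vec z)$ but also for its $\vec z$-derivatives, up to total order $2$ (excluding $\partial_t^2$). Consequently, every term produced in differentiating the composition once or twice in $x$ (and similarly for mixed $x$-, $t$-derivatives) contains at least one factor of the form $(\partial_x^\alpha \partial_t^\beta \partial_{\vec z}^\delta A)(x,t,v(x,t),\ldots)$ with $|\alpha|+|\beta|+|\delta|\geq 1$, which carries the needed $\jap{x}^{-2}$ decay, while the remaining cofactors are uniformly $L^\infty$-bounded derivatives of $v$ controlled by the Sobolev step above. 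The pointwise bound on $I-A$ itself comes directly from \linref{NLFlat}.

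Finally, for \linref{LNoTrap}, at $t=0$ the composed symbol is $h_0(x,\xi) = a_{jk}(x,0,u_0,\bar u_0,\nabla u_0,\nabla \bar u_0)\xi_j\xi_k$, which is precisely the symbol appearing in \linref{NLNoTrap}, so the non-trapping hypothesis transfers with no further work. The existence of Doi symbols $p_\mu$ satisfying the required lower bound $H_{h_0}p_\mu \geq C_0^{-1}\bigl(|\xi|/\jap{x}^2 + |\xi|/\jap{x-x_\mu}^2\bigr) - C_0$ then follows from Lemma~\ref{uncentereddoi1} together with Remark~\ref{doibump}, and the resulting seminorms are controlled in terms of the seminorms of $h_0$, hence of $u_0$.
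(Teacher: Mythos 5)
Your proposal is correct and follows essentially the same route as the paper's own proof: Sobolev embedding supplies the needed $L^\infty$ bounds on spatial derivatives of $v$, time derivatives of $v$ are controlled by substituting the equation, \linref{LRegularity} and \linref{LElliptic} follow directly by the chain rule and \nlinref{NLReal2ndOrder}--\nlinref{NLElliptic}, the asymptotic-flatness bound \linref{LFlat} uses that \nlinref{NLFlat} controls \emph{all} derivatives of $A$ up to order two (in particular the $\vec z$-derivatives, which is exactly what makes the chain-rule terms decay), and \linref{LNoTrap} transfers since at $t=0$ the composed symbol is the one appearing in \nlinref{NLNoTrap}.
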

\begin{proof}
  Take $s>N+n/2+4$, then $v$ together with all of it's derivatives to
  order $N+1$ are in  $L^\infty$.  This, together with
  \nlinref{NLRegularity}, allows us to verify \linref{LRegularity}.
  The assumptions 
  \linref{LElliptic} and \linref{LNoTrap} follow immediately from
  \nlinref{NLReal2ndOrder}, \nlinref{NLSymmetric},
  \nlinref{NLElliptic}.   and \nlinref{NLNoTrap}.

  It remains to verify \linref{LFlat} and \linref{LFirstOrder}.
  Clearly, $\abs{I-a_{jk}((x,t,v,\bar v,\grad v, \grad\bar v ))}\leq C/\jap{x}^2$ follows from
  \nlinref{NLFlat} and our $L^\infty$ bounds just as in the cases
  above.

  Let $*$ denote $(x,t,v,\bar v,\grad v, \grad\bar v )$, and consider
  \[\del_{x_i}a_{jk}(*)=\diff{a}{x_i}(*) +
  \diff{a_{jk}}{v}(*)\diff{v}{x_i} + \cdots +
  \diff{a_{jk}}{\del_{x_n}v}(*)\frac{\del^2 \bar v}{\del x_i \del
    x_n}.\]
  The first and second order derivatives of  $v$ are in $L^\infty$ because
  $s> n/2+2$.  Hence by using \nlinref{NLFlat} we can bound each term
  by $C/\jap{x}^2$.

  The estimate for $\del_t a_{jk}(x,t,v,\bar v,\grad v, \grad\bar v)$
  is similar.  The primary difference is that we have to estimate terms
  of the form $\del_tv$ and $\del_t(\del_{x_i} v)$ in $L^\infty$.  To
  handle $\del_tv$ it is enough to notice that $\del_t v$ is equal to
  the right hand side of \eqref{NonLinGenS1}. Each term of
  $\scr{L}(v)v$ is in $L^\infty$ by \nlinref{NLRegularity} and our
  $L^\infty$ bound on $v$ and it's derivatives.  Since $f(x,t)\in
  L^\infty_tH^s_x$  it is in $L^\infty_{t,x}$.  To
  handle the final term $\del_t\del_{x_i} v$ we apply $\del_{x_i}$ to
  our equation and get
\begin{equation*}
\begin{aligned}
&\del_t \del_{x_i}v = -\epsilon\Delta^2 \del_{x_i} v +
ia_{jk}(*)\del_{x_j x_k}(\del_{x_i}v) +
i\diff{a_{jk}}{x_i}(*)\del_{x_j x_k}v +
i\diff{a_{jk}}{v}(*)\del_{x_i}v\\ &+i\diff{a_{jk}}{\bar
  v}(*)\del_{x_i}\bar v
 +i \sum_{l=1}^m \paren{\diff{a_{jk}}{\del_l v}(*)\del_{x_j x_k}v}\del_{x_l x_i}v
+i\sum_{l=1}^m \paren{\diff{a_{jk}}{\del_l \bar v}(*)\del_{x_j x_k}v}\del_{x_l x_i}\bar v\\
&+\vec b_1(*)\cdot\grad \del_{x_i}v + \cdots + \del_{x_i} f(x,t).
\end{aligned}
\end{equation*}

We find that each of these terms may again be handled by our $L^\infty$
bounds for $v$ and its derivatives and \nlinref{NLRegularity}.  Again
$\del_{x_i}f\in L^\infty_t H^{s-1}_x$ so we may bound $\norm{\del_{x_i} f}_\infty.$

Lastly, to bound $\del_t\del_{x_i}a_{jk}(x,t,v,\bar v, \grad v \grad
\bar v)$ we proceed in the same way.  We additionally have to estimate
terms of the form $\del_t\del_{x_i}\del_{x_j}v$ in $L^\infty$, but we
simply apply $\del_{x_j}\del_{x_i}$ to our equation, and again we only
encounter terms involving the derivatives of our coefficients
evaluated at $v$ multiplied by derivatives of $v$ of order less than
4, so for $s>n/2+4$ we may estimate these terms as before.
\end{proof}

To see that our first order terms will satisfy the conditions of our
linear theory we first need two lemma's.

\begin{lem}\label{bintegrable}
  Suppose $b(x,t, z_1,\ldots, z_{2n+2})\in C^{\tilde N}_b(\R\times\R\times
  B^{2n+2}_M(0))$  satisfies \\$b(x,t,0,0,\vec 0,\vec 0)=0$ and
  $\del_{z_i} b(x,t,0,0,\vec 0,\vec 0)=0$.  For any $M\in
  \N$, if $s>n/2+M+1$ and $\tilde N>s+2$  then $b(\XuDu)\in W^{1,M}$
  for $u\in L^\infty_tH^s_x$
\end{lem}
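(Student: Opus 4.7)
The plan is to exploit the double vanishing of $b$ at the origin in the $z$-variables via Taylor's theorem with integral remainder. Since $b(x,t,0,0,\vec 0,\vec 0)=0$ and $\del_{z_i}b(x,t,0,0,\vec 0,\vec 0)=0$ hold uniformly in $(x,t)$, the function $b$ vanishes to second order at $z=0$, so I would write
\[b(x,t,z)=\sum_{|\beta|=2}z^\beta c_\beta(x,t,z),\qquad c_\beta(x,t,z)=\frac{2}{\beta!}\int_0^1(1-s)\del_z^\beta b(x,t,sz)\,ds.\]
Each $c_\beta$ inherits the $C_b^{\tilde N-2}$ regularity of $b$. Substituting $z=(u,\bar u,\grad u,\grad\bar u)$ makes $z^\beta$ a product of two factors drawn from $\{u,\bar u,\grad u,\grad\bar u\}$, so $b(\XuDu)$ is a finite sum of such quadratic products multiplied by the bounded composites $c_\beta(\XuDu)$.

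Next, for each multi-index $\alpha$ with $|\alpha|\le M$ I would apply $\del_x^\alpha$ and expand via Leibniz. Each resulting term has the schematic form $\del^{\gamma_1}v_1\cdot\del^{\gamma_2}v_2\cdot\del^\delta\bigl[c_\beta(\XuDu)\bigr]$ with $v_1,v_2\in\{u,\bar u,\grad u,\grad\bar u\}$ and $\gamma_1+\gamma_2+\delta=\alpha$. A Fa\`a di Bruno expansion of the last factor shows it is a polynomial in derivatives of $u$ and $\bar u$ of order at most $|\delta|+1\le M+1$, with coefficients given by mixed partials of $c_\beta$ of order at most $|\delta|\le M$ evaluated at $(\XuDu)$, which are uniformly bounded.

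The estimate then proceeds by Sobolev embedding and Cauchy-Schwarz. Because $s>n/2+M+1$, every derivative of $u$ of order at most $M+1$ lies in $H^{s-(M+1)}\hookrightarrow L^\infty$, so the composite factor $\del^\delta[c_\beta(\XuDu)]$ belongs to $L^\infty$. The two quadratic factors $\del^{\gamma_i}v_i$ are each derivatives of $u$ or $\bar u$ of order at most $|\gamma_i|+1\le M+1\le s$ and therefore lie in $L^2$. Cauchy-Schwarz then places $\del^{\gamma_1}v_1\cdot\del^{\gamma_2}v_2$ in $L^1$, and multiplication by the $L^\infty$ factor preserves this. Summing the finitely many terms yields $\del_x^\alpha[b(\XuDu)]\in L^1$, uniformly in $t$, for every $|\alpha|\le M$.

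The main obstacle is bookkeeping: one must check that the hypotheses line up so that $s>n/2+M+1$ simultaneously provides $L^\infty$ control of every derivative of $u$ up to order $M+1$ (bounding $\del^\delta[c_\beta(\XuDu)]$) and $L^2$ control of the same derivatives (feeding the Cauchy-Schwarz pairing), while $\tilde N>s+2$ ensures each $c_\beta$ retains the $C^{\tilde N-2}$ regularity needed for the chain-rule expansion. No genuinely hard estimate is required once the Taylor decomposition is in place; the role of the hypotheses is precisely to distribute the $M$ derivatives into one $L^\infty$ slot and two $L^2$ slots that pair by Cauchy-Schwarz.
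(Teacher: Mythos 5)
Your argument is essentially the paper's: both exploit the double vanishing of $b$ at $z=0$ via Taylor's theorem with integral remainder to expose a quadratic structure in the derivatives of $u$, and then close the $L^1$ estimate by pairing two $L^2$ factors via Cauchy--Schwarz while parking the remaining composite factor in $L^\infty$ using $s>n/2+M+1$. The only difference is organizational — you write the multivariate Taylor form $b=\sum_{|\beta|=2}z^\beta c_\beta$ and handle $\del_x^\alpha$ directly by Leibniz and Fa\`a di Bruno, whereas the paper uses the one--dimensional radial parameterization $f(r)=b(x,t,r\cdot)$ and iterates the observation that each term of $\del_{x_i}b(\XuDu)$ again has the double--vanishing structure — but the underlying mechanism and bookkeeping are the same.
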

\begin{proof}
    First to see it is in $L^1$, we set $f(r)=b(x,t,ru,r\bar u,r\grad
  u,r\grad \bar u).$  Then we calculate
  \begin{multline*}
  f'(r)=\diff{b}{z_1}(x,t,ru,r\bar u,r\grad u,r\grad \bar u)u +
  \diff{b}{z_2}(x,t,ru,r\bar u,r\grad u,r\grad \bar u)\bar u +\\
  \sum_{i=1}^n\diff{b}{z_{i+2}}(x,t,ru,r\bar u,r\grad u,r\grad \bar u)\diff{u}{x_i}
  + \sum_{i=1}^n\diff{b}{z_{i+n+2}}(x,t,ru,r\bar u,r\grad u,r\grad \bar
  u)\diff{\bar u}{x_i}
  \end{multline*}
  Clearly $f(0)=f'(0)=0,$ and $f(1)=b_1(x,t,u,\bar u,\grad
  u,\grad\bar u)$.
  Now, \[\norm{b}_1=\norm{f(1)}_1=\norm{\int_0^1(1-r)f''(r)\,dr}_1\leq \int_0^1(1-r)\norm{f''(r)}_1\,dr.\]

With in $f''(r)$ are terms of the form $(\del^2 b) u^2$, $(\del^2 b) u\bar
u$, $(\del^2 b) u\del u$, $(\del^2 b) \bar u\del u$,\\ $(\del^2 b) u\del \bar
u$, etc.  The key observation is that they all involve exactly of
degree two when looked at as polynomials in the derivatives of $u$.
So we may apply Cauchy-Schwartz and integrate in $s$.  For example

\begin{multline*}\int \abs{\frac{\del^2 b}{\del z_1\del z_3}(x,t,u,\bar u,\grad u,
  \grad \bar u)u\diff{u}{x_1}}\,dx\leq \norm{\frac{\del^2 b}{\del
    z_1 \del z_3}}_\infty\int\abs{u\diff{u}{x_1}}\,dx\\
\leq \norm{\frac{\del^2 b}{\del
    z_1\del z_3}}_\infty \norm{u}_2\norm{\diff{u}{x_1}}_2\leq
C_{b,u}\norm{u}_{H^s}^2.
\end{multline*}

Estimates of $\del_x^\alpha b$ work similarly.  In fact,
\[\del_{x_i} b=\diff{b}{x_i}(\cdot)+\diff{b}{z_1}(\cdot)\diff{u}{x_i}+
  \diff{b}{z_2}(\cdot)\diff{\bar u}{x_i} +
  \sum_{j=1}^n\diff{b}{z_{j+2}}(\cdot)\frac{\del^2u}{\del x_i\del x_j}
 + \sum_{j=1}^n\diff{b}{z_{j+n+2}}(\cdot)\frac{\del^2 u}{\del x_i\del x_j}\]

Each term has the property that if it is evaluated at $u=0$ it is 0,
as well a derivative in the $z_1$-$z_{2n+2}$.  Hence we may apply the
argument above to bound the $L^1$ norm of each of these.
\end{proof}

The above lemma together with the following observation of Kenig
et.\ al.\ \cite{CKGPLV1998} allow
us to see that $b_1(\XuDu)$ satisfies our linear assumptions.

\begin{lem}\label{bdecay}
  For $M>N+n$ if $b(x,t)\in W^{1,M}$ uniformly in $t$, then one
  can find $\varphi_\mu(x,t)$ so
  that $\supp \varphi_\mu(\cdot,t) \subset Q_\mu^*$,\\
  $\norm{\varphi_\mu(\cdot,t)}_{C^N_b}\leq 1$, and
  \[b(x,t)=\sum_{\mu\in\Z^n}\alpha_\mu(t)\varphi_\mu(x,t)\text{ with
  }\sum_{\mu\in\Z^n}\abs{\alpha_\mu}\leq c\norm{b}_{W^{1,m}}. \]
\end{lem}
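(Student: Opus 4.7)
The plan is to build the decomposition from a smooth partition of unity adapted to the covering by the doubled cubes $Q_\mu^*$. Specifically, I would fix a single bump $\psi\in C^\infty_c(\R^n)$ supported in $Q_0^*$ and satisfying $\sum_{\mu\in\Z^n}\psi(\cdot-x_\mu)\equiv 1$, then set $\psi_\mu(x)=\psi(x-x_\mu)$ and $b_\mu(x,t)=\psi_\mu(x)\,b(x,t)$. Trivially $b=\sum_\mu b_\mu$ and $\supp b_\mu(\cdot,t)\subseteq Q_\mu^*$, and by construction the $\psi_\mu$ have $C_b^M$ norms bounded uniformly in $\mu$ (since they are all translates of a fixed function).

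Next, the hypothesis $M>N+n$ is precisely the Sobolev embedding threshold for $W^{M,1}(Q_\mu^*)\hookrightarrow C_b^N(Q_\mu^*)$. Because the $Q_\mu^*$ are translates of a single bounded Lipschitz domain, this embedding holds with a constant $K$ independent of $\mu$, giving $\norm{b_\mu(\cdot,t)}_{C_b^N}\leq K\norm{b_\mu(\cdot,t)}_{W^{M,1}}$ uniformly in $\mu$ and in $t$. Defining $\alpha_\mu(t)=K\norm{b_\mu(\cdot,t)}_{W^{M,1}}$ and $\varphi_\mu(x,t)=b_\mu(x,t)/\alpha_\mu(t)$ (with $\varphi_\mu\equiv 0$ when $\alpha_\mu$ vanishes) then produces the desired pieces with $\norm{\varphi_\mu(\cdot,t)}_{C_b^N}\leq 1$, $\supp \varphi_\mu(\cdot,t)\subseteq Q_\mu^*$, and $b(x,t)=\sum_\mu \alpha_\mu(t)\varphi_\mu(x,t)$.

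It remains to establish the summability $\sum_\mu \alpha_\mu(t)\leq c\norm{b(\cdot,t)}_{W^{M,1}}$. By the Leibniz rule, $\del^\beta b_\mu$ for $\abs{\beta}\leq M$ is a sum of terms $(\del^{\beta_1}\psi_\mu)(\del^{\beta_2}b)$ with $\abs{\del^{\beta_1}\psi_\mu}$ bounded uniformly in $\mu$, so $\norm{b_\mu(\cdot,t)}_{W^{M,1}}\leq C\sum_{\abs{\beta}\leq M}\int_{Q_\mu^*}\abs{\del^\beta b(x,t)}\,dx$. The key observation is that the doubled cubes $\{Q_\mu^*\}$ have bounded overlap (each point of $\R^n$ lies in at most $2^n$ of them), so summing over $\mu$ yields $\sum_\mu \norm{b_\mu(\cdot,t)}_{W^{M,1}}\leq C'\norm{b(\cdot,t)}_{W^{M,1}}$.

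The argument is essentially routine localization combined with Sobolev embedding, so there is no single serious obstacle; the only mild subtlety is ensuring uniformity of constants, which follows from the translation invariance of the construction of $\psi_\mu$ and the $t$-independence of the partition of unity. In particular, the choice $M>N+n$ in the hypothesis is exactly what is needed to match the Sobolev embedding and produce pieces of the claimed regularity.
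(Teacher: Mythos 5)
Your proof is correct and takes essentially the same route as the paper: localize $b$ by a smooth partition of unity subordinate to $\{Q_\mu^*\}$, normalize each piece using the uniform Sobolev embedding $W^{M,1}(Q_\mu^*)\hookrightarrow C^N_b(Q_\mu^*)$ (which is what the hypothesis $M>N+n$ provides), and sum using the bounded overlap of the doubled cubes. The only cosmetic difference is that you set $\alpha_\mu$ to be a constant times the local $W^{M,1}$ norm of the piece, whereas the paper takes $\alpha_\mu$ to be its local $C^N$ norm and then applies the embedding before summing; the two choices yield the same conclusion.
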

\begin{proof}
  By the Sobolev Imbedding theorem if $M>N+n$ then
  $\norm{b(\cdot,t)}_{C^N(Q_\mu^*)}\leq C\norm{b}_{W^{1,M}(Q_\mu^*)}$
  with $C$ independent of $\mu$.  Let $ \eta_\mu$ be a $C^\infty$
  partition of unity subordinate to $Q^*_\mu$ with
  $\norm{\eta_\mu}_{C^N}$ independent of $\mu$.  Then
  $b(x,t)=\sum_{\mu\in\Z^n}\eta_\mu b(x,t)$ and $\norm{\eta_\mu
    b}_{C^N}\leq C \norm{b}_{W^{1,M}(Q_\mu^*)}$.  Since $Q_\mu^*$ have
  bounded overlap $\sum_{\mu\in \Z^n}\norm{b}_{W^{1,M}(Q_\mu^*)}\leq
  C\norm{b}_{W^{1,M}}$.  Hence we just have to
  set \[\varphi_\mu(x,t)=\frac{b(x,t)\eta_\mu(x)}{\norm{\eta_\mu
      b(\cdot,t)}_{C^N}}\text{ and } \alpha_\mu(t) = \norm{\eta_\mu
    b(\cdot,t)}_{C^N}.\]
\end{proof}

% \begin{lem}
%   The first order coefficients $b_1(\XuDu)$,
%   $\del_{x_l}\paren{a_{jk}(\XuDu)}$, satisfy \linref{LFirstOrder}.
% \end{lem}
% \begin{proof}
%   Let $M$ be as in Lemma \ref{bdecay}.  Then by \nlinref{NLFirstOrder}
%   we may apply Lemma \ref{bintegrable} to get that $b_{1,i}\in
%   W^{1,m}$.  Similarly we may apply Lemma \ref{bintegrable} to $\del_t
%   b_{1,i}$.  And hence $b$ has the necessary decomposition for
%   \linref{LFirstOrder}.  The bounds for
%   $\del_{x_l}\paren{a_{jk}(\XuDu)}$ and
%   $\del_t\del_{x_l}\paren{a_{jk}(\XuDu)}$ follow from
%   \nlinref{NLFirstOrder} together with the $L^\infty$ bounds for
%   $\del_{x_l} u$, $\del_{x_l} \bar u$, $\del_{x_lx_i}^2 u$ and
%   $\del_{x_lx_i}^2 \bar u$
% \end{proof}

% In order to apply our linear estimates to we need to put our equation
% into divergence form.  In doing so we introduce a first order term,
% namely $\del_{x_j}(a_{jk}(\XuDu))\del_k u$.  By our above lemma the
% resulting first order term still satisfies the conditions for the
% linear theory.

Let $J=(1+\Delta)^{\frac{1}{2}}$.  In order to get the necessary
estimates on $\norm{u(t)}_{H^s}$ we inductively estimate
$J^{2m}u$.
Again let $*$ denote $(x,t,u^\epsilon,\bar u^\epsilon,\grad
u^\epsilon,\grad \bar u^\epsilon)$.  As in \cite{CKGPLV2004} we
consider the following systems, for $m=1, 2, \ldots s/2$,
\[
\begin{aligned}
  \del_t J^{2m} u^\epsilon =& -\epsilon \Delta^2 J^{2m}u^\epsilon
  + \scr{L}(u^\epsilon)J^{2m}u^\epsilon +
   2mi\del_{x_l}\paren{a_{jk}(*)}\del^3_{jkl}J^{2\paren{m-1}}u^\epsilon \\
   +& i\del_{jk}^2u^\epsilon\del_{\del_l u}a_{jk}(*) \del_lJ^{2m}u^\epsilon
   + i\del^2_{jk}u^\epsilon\del_{\del_l \bar u}a_{jk}(*)\del_lJ^{2m}\bar u^\epsilon\\
   +&\del_j u^\epsilon \paren{\del_{\del_l u}b_{1,j}(*)\del_lJ^{2m}u^\epsilon
   +\del_{\del_l \bar u}b_{1,j}(*)\del_lJ^{2m}\bar u^\epsilon}\\
   +&\del_j \bar u^\epsilon \paren{\del_{\del_l u}b_{2,j}(*)\del_lJ^{2m}u^\epsilon
   +\del_{\del_l \bar u}b_{2,j}(*)\del_lJ^{2m}\bar u^\epsilon} \\
   +&c_{1,2m}(x,t,\paren{\del^\beta u^\epsilon}_{\abs{\beta}\leq 4},\paren{\del^\beta\bar u^\epsilon}_{\abs{\beta}\leq 4})R_{2m,1}J^{2m}u^\epsilon\\
 +& c_{2,2m}(x,t,\paren{\del^\beta u^\epsilon}_{\abs{\beta}\leq 4},\paren{\del^\beta\bar u^\epsilon}_{\abs{\beta}\leq 4})R_{2m,2}J^{2m}\bar u^\epsilon \\
 +&f(x,t,\paren{\del^\beta u^\epsilon}_{\abs{\beta}\leq
   2m-2},\paren{\del^\beta u^\epsilon}_{\abs{\beta}\leq 2m-2}) +J^{2m}f(x,t)
\end{aligned}
\]

Or more briefly,
\begin{multline*}
\del_t J^{2m}u^\epsilon = -\epsilon\Delta^2
J^{2m}u^\epsilon+\scr{L}_{2m}(u^\epsilon)J^{2m}u^\epsilon
\\+f_{2m}(x,t,\paren{\del^\beta u^\epsilon}_{\abs{\beta}\leq
  2m-2},\paren{\del^\beta u^\epsilon}_{\abs{\beta}\leq 2m-2})
\end{multline*}

Where
\begin{align*}
&\scr{L}_{2m}(u)v=ia_{jk}(x,t,u,\bar u, \grad u,\grad \bar u)\del^2_{jk}v +
b_{1,1_j}(x,t,\paren{\del^\alpha u}_{\abs{\alpha}\leq 2},\paren{\del^\alpha \bar u}_{\abs{\alpha}\leq 1})\del_{x_j}v + \\&
+\tilde b_{{lk,1}_j}(x,t,u,\bar u, \grad u,\grad \bar u)R_{lk}\del_{x_j}v
+ \vec b_{{2m,2}}(x,t,\paren{\del^\alpha u}_{\abs{\alpha}\leq 2},\paren{\del^\alpha \bar u}_{\abs{\alpha}\leq 1})\cdot \grad\bar v\\&
 + c_{1,2m}(x,t,\paren{\del^\beta u}_{\abs{\beta}\leq 4},\paren{\del^\beta\bar u}_{\abs{\beta}\leq 4})R_{2m,1}v\\&
 + c_{2,2m}(x,t,\paren{\del^\beta u}_{\abs{\beta}\leq 4},\paren{\del^\beta\bar u}_{\abs{\beta}\leq 4})R_{2m,2}\bar v,
\end{align*}
with
\begin{multline*}
b_{1,1_j}(x,t,u,\bar u, \grad u,\grad \bar u) =
b_{1_j}(x,t,u,\bar u, \grad u,\grad \bar u) +
i\del_{lk}^2u\del_{\del_j u}a_{lk}(x,t,u,\bar u, \grad u,\grad \bar u)
\\+\del_l u\del_{\del_j u}b_{1_l}(x,t,u,\bar u, \grad u,\grad \bar u) +
\del_l\bar u\del_{\del_j u}b_{2_l}(x,t,u,\bar u, \grad u,\grad \bar u),
\end{multline*}
$$\tilde b_{lk,1_j}=2mi\del_{x_j}(a_{lk}(x,t,u,\bar u, \grad u,\grad \bar u)),$$
$$R_{lk}=\del^2_{lk}J^{-2}, \quad \text{ and }$$
\begin{multline*}
b_{2m,2_j}(x,t,u,\bar u, \grad u,\grad \bar u) =
b_{2,j}(x,t,u,\bar u, \grad u,\grad \bar u) +
i\del_{lk}^2u\del_{\del_j \bar u}a_{lk}(x,t,u,\bar u, \grad u,\grad \bar u)
\\+\del_l u\del_{\del_j \bar u}b_{1_l}(x,t,u,\bar u, \grad u,\grad \bar u) +
\del_l\bar u\del_{\del_j \bar u}b_{2_l}(x,t,u,\bar u, \grad u,\grad \bar u).
\end{multline*}

The same observations from \cite{CKGPLV2004} apply.  The principal
part of $\scr{L}_{2m}(u^\epsilon)$ is independent of $m$.  The
coefficients $b_{{1,1}_j}$, $b_{{2m,2}_j}$, and $\tilde b_{lk}$ depend
on the coefficients $a_{jk}$, $\vec b_l$ and their first derivatives,
$u$ and the derivatives of $u$, but only on $m$ as a multiplicative
constant.  Notice that here both $a_{jk}$ and $b_2$ generate first
order terms but the $\Psi$DO's $R_{lk}$ are independent of $m$.

We need to verify that these coefficients satisfy the conditions for
our linear theory when we evaluate them at any solution $v\in
X_{T,M_0}$ with $v(0)=u_0$.  Since the leading order coefficients have
not changed, Lemma~\ref{2ndorderassumps} still assures us that our
linear assumptions are verified.  Because $s>N+n/2+4$ our $H^s$ bounds
on $v$ together with \nlinref{NLRegularity} give us that the other
coefficients verify \linref{LRegularity}.  Now we just need to verify
\linref{LFirstOrder}.  Notice that in our linear theory we had the
equation in divergence form and hence we have to add an additional
first order term to be able to apply the theory.

\begin{lem}
  The first order coefficients
 $\vec b_{1,1_j}(x,t,v,\bar v,\grad v, \grad\bar v)$, \\
 $\tilde b_{lk,1_j}(x,t,v,\bar v,\grad v, \grad\bar v)R_{lk}$
 and $\del_{x_l}\paren{a_{jk}(x,t,v,\bar v,\grad v,\grad\bar v)}$,
 satisfy \linref{LFirstOrder}.
\end{lem}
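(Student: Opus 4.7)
The plan is to check the decomposition required by \linref{LFirstOrder} for each of the three coefficient families in turn. In every case the template is the same: obtain a $W^{1,M}$ bound on the coefficient (uniformly in $t$, and with a sharper bound at $t=0$ in terms of $\|u_0\|_{H^s}$), then invoke Lemma~\ref{bdecay} with some $M>M_L+n$ to extract an $\ell^1$-summable, $Q_\mu^*$-localised decomposition. What differs between the three cases is only how the $W^{1,M}$ bound is produced, and for this the inputs are \nlinref{NLFirstOrder}, Lemma~\ref{bintegrable}, and the Sobolev embedding (available since $s>N+n/2+4$ implies $v$ together with enough derivatives lies in $L^\infty$).

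First I would handle $\del_{x_l}\bigl(a_{jk}(x,t,v,\bar v,\grad v,\grad\bar v)\bigr)$. By the chain rule this equals
\[\del_{x_l}a_{jk}(*) + \del_{z_1}a_{jk}(*)\,\del_{x_l}v + \del_{z_2}a_{jk}(*)\,\del_{x_l}\bar v + \sum_{m=1}^{n}\bigl(\del_{z_{m+2}}a_{jk}(*)\,\del_{x_l}\del_{x_m}v + \del_{z_{m+n+2}}a_{jk}(*)\,\del_{x_l}\del_{x_m}\bar v\bigr).\]
By \nlinref{NLFirstOrder}, both $\del_{x_l}a_{jk}(x,t,\vec z)$ and $\del_{z_m}a_{jk}(x,t,\vec z)$ admit decompositions $\sum_\mu \alpha_\mu \phi_\mu(x,t,\vec z)$ with $\supp\phi_\mu(\cdot,t,\vec z)\subseteq Q_\mu^*$ and $\sum\alpha_\mu < C_M$, uniformly in $\vec z\in B_M^{2n+2}(0)$. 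Substituting $\vec z=(v,\bar v,\grad v,\grad\bar v)$ preserves the decomposition (with the same $\alpha_\mu$), and the derivative factors $\del_{x_l}v$, $\del_{x_l}\del_{x_m}v$ lie in $L^\infty$ by the Sobolev bound on $v\in H^s$, so they can be absorbed into the $\phi_\mu$ at the cost of multiplying by a constant depending on $M_0$. At $t=0$ one evaluates instead at $(u_0,\bar u_0,\grad u_0,\grad\bar u_0)$, and since the bound is uniform the constants become the $C_0$-type constants demanded by \linref{LFirstOrder}.

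Next I would treat $\vec b_{1,1_j}(*)$. Its first summand is $b_{1,j}(*)$; by \nlinref{NLFirstOrder} we have $b_{1,j}(x,t,0,0,\vec 0,\vec 0)=0$ and $\del_{z_i}b_{1,j}(x,t,0,0,\vec 0,\vec 0)=0$, which are exactly the hypotheses of Lemma~\ref{bintegrable}. This lemma gives $b_{1,j}(*)\in W^{1,M}$ with norm controlled by a polynomial in $\|v\|_{H^s}$ (and by a polynomial in $\|u_0\|_{H^s}$ at $t=0$), and Lemma~\ref{bdecay} then yields the required decomposition. The remaining summands in $\vec b_{1,1_j}$ have the schematic form (derivative of $a_{lk}$ or $b_l$ evaluated at $*$) times (first or second derivative of $v$ or $\bar v$); for these the derivative of $a_{lk}$ or $b_l$ in the $x$ or $z$ variables again decomposes via \nlinref{NLFirstOrder}, and the $L^\infty$ bound on the derivatives of $v$ is absorbed as above. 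The auxiliary coefficient $\vec b_{2m,2_j}$ is handled identically.

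Finally I would address $\tilde b_{lk,1_j}R_{lk}$. Since $R_{lk}=\del^2_{lk}J^{-2}$ is a zero-order pseudodifferential operator, the composition $\tilde b_{lk,1_j}(*)R_{lk}\del_{x_j}$ is a first-order PDO whose principal symbol equals $i\tilde b_{lk,1_j}(x,t)\xi_l\xi_k\xi_j/\jap{\xi}^{2}$, dominated pointwise by $|\tilde b_{lk,1_j}(x,t)||\xi|$. Because $\tilde b_{lk,1_j}=2mi\,\del_{x_j}a_{lk}(*)$, the first paragraph already provides the required $(\beta_\mu^0,\varphi_\mu)$-decomposition for $|\tilde b_{lk,1_j}|$. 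The lower order contribution from the symbolic expansion contributes an $S^0_{1,0}$ piece that is absorbed into $c_1,c_2$. The time-derivative clause of \linref{LFirstOrder} is verified in the same manner using the ``similarly for $\del_t a_{jk}$, $\del_{z_m}a_{jk}$, $\del_t\del_{z_m}a_{jk}$'' part of \nlinref{NLFirstOrder} together with Lemma~\ref{bintegrable} applied to $\del_t b_j$, noting that $\del_t v$ may be re-expressed through the equation \eqref{NonLinGenS1} and bounded in $L^\infty$ as in Lemma~\ref{2ndorderassumps}.

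The main obstacle I expect is bookkeeping: ensuring that every time the chain rule pulls out a derivative of $v$, the corresponding factor is controlled in $L^\infty$ (so that it can be folded into a rescaled $\varphi_\mu$) without destroying either the regularity $\|\varphi_\mu\|_{C^{M_L}}\leq 1$ or the summability $\sum\beta_\mu^0\leq C_0$. This is precisely why the threshold $s>N+n/2+4$ is imposed and why \nlinref{NLFirstOrder} was formulated uniformly over $\vec z\in B_M^{2n+2}(0)$ rather than fiberwise in $\vec z$.
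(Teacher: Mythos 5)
Your overall template---obtain a $W^{1,M}$ bound and feed it into Lemma~\ref{bdecay}---is the right idea, and your treatment of $\del_{x_l}\paren{a_{jk}(*)}$, of the leading summand $b_{1,j}(*)$ in $\vec b_{1,1_j}$, and of $\tilde b_{lk,1_j}R_{lk}$ matches the paper's. But there is a genuine gap in the cross terms $\del_l v\,\del_{\del_j u}b_{k_l}(*)$ for $k=1,2$. You assert that ``the derivative of $a_{lk}$ or $b_l$ in the $x$ or $z$ variables again decomposes via \nlinref{NLFirstOrder},'' but \nlinref{NLFirstOrder} supplies a cube decomposition only for $\del_{x_l}a_{jk}$, $\del_t a_{jk}$, $\del_{z_m}a_{jk}$, and $\del_t\del_{z_m}a_{jk}$; it makes no such hypothesis for $\del_{z_m}b_j$. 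With no base decomposition available, there is nothing into which the bounded factor $\del_l v$ can be ``absorbed,'' and absorbing $L^\infty$ factors cannot by itself manufacture the $L^1$ (hence $\ell^1$) summability that \linref{LFirstOrder} demands: $\del_{z_m}b_l(*)$ is only $L^2\cap L^\infty$, so the product with $\del_l v$ a priori lies in $L^2$, not $L^1$.

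The mechanism the paper uses for these cross terms is the quadratic cancellation. Since $b_k(x,t,0,0,\vec 0,\vec 0)=0$ and $\del_{z_i}b_k(x,t,0,0,\vec 0,\vec 0)=0$, the function $z_i\del_{z_l}b_k(x,t,\vec z)$ and all of its first $z$-derivatives vanish at $\vec z=0$, i.e.\ it vanishes to second order. Thus Lemma~\ref{bintegrable} applies to $z_i\del_{z_l}b_k$ exactly as it does to $b_{1,j}$, yielding the $W^{1,M}$ bound for $\del_l v\,\del_{\del_j u}b_{k_l}(*)$ directly; Lemma~\ref{bdecay} then produces the required $(\beta_\mu,\varphi_\mu)$ decomposition, uniformly in $t$ and with the sharper $C_0$-type constants at $t=0$. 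The same observation handles the analogous terms in $\vec b_{2m,2_j}$. If you replace the NLFirstOrder appeal for $\del_{z_m}b_l$ with this second-order-vanishing argument, the rest of your proposal is consistent with the paper.
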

\begin{proof}
  Let $M$ be as in Lemma~\ref{bdecay}.  Then by \nlinref{NLFirstOrder}
  we may apply directly apply Lemma~\ref{bintegrable} to get that $b_{1_j}\in
  W^{1,M}$.  Similarly we may apply Lemma~\ref{bintegrable} to $\del_t
  b_{1_j}$.  And hence $b_{1,j}$ has the necessary decomposition for
  \linref{LFirstOrder}.   For the terms involving $\del_{\del_ju}b_{k_l}$ $(k=1,2)$
  notice that if
  $b_k(x,t,0,0,\vec 0, \vec 0)=\del_{z_i}b(x,t,0,0,\vec 0,\vec 0) = 0$
  then $\paren{z_i\del_{z_l}b_k(x,t,\vec z)}|_{z=0} =
  \del_{z_m}\paren{z_i\del_{z_l}b_k(x,t,\vec z)}|_{z=0}=0$. So we may
  again apply Lemma~\ref{bintegrable} and in the same way as for
  $b_{1,j}$ get the desired decomposition for these terms.

  The bounds for
  $\del_{x_l}\paren{a_{jk}(x,t,v,\bar v, \grad v, \grad\bar v)}$ and
  $\del_t\del_{x_l}\paren{a_{jk}(x,t,v,\bar v,\grad v, \grad\bar v)}$ follow from
  \nlinref{NLFirstOrder} together with the $L^\infty$ bounds for
  $\del_{x_l} u$, $\del_{x_l} \bar u$, $\del_{x_lx_i}^2 u$ and
  $\del_{x_lx_i}^2 \bar u$.  Similarly with $b_{lk,1_j}(x,t,v,\bar v,
  \grad v, \grad\bar v)R_{lk}$ and $i\del_{lk}^2v\del_{\del_j
    u}a_{lk}(x,t,v,\bar v, \grad v,\grad \bar v)$.
\end{proof}

For $J^{2m}u^\epsilon$, observe that if we evaluate our coefficients at any $v\in
X_{M_0,T}$ with $v(0)=u_0$ we arrive at a linear equation whose
solution satisfies Theorem~\ref{aplin} with $A_m$ depending on $u_0$ and
the behavior of the coefficients for the system of $J^{2m}u^{\epsilon}$
at $t=0$.  Let $A=\max A_m$ and take
$M_0=20A\lambda$.  Notice at each stage the terms that come from
$f_{2m}$ depend only on terms of order strictly
less then $2(m-1)$, which have been estimated in a previous step in
$L^\infty_TL^2_x$ and so appear with a factor of $T$ in front when we
apply our a priori estimate.

Thus there is a $T'$ independent of $\epsilon$ so that for a fixed
increasing function $R$, so that
\[\sup_{[0,T']}\norm{u^\epsilon(\cdot,t)}_s\leq A\paren{\lambda+T'R(M_0)}\]
We may choose $T'$ small enough so that $A\paren{\lambda+T'R(M_0)}\leq
M_0/4=5A\lambda$.  Then, by our remark after
Theorem~\ref{contractmapping}, we can reapply our contraction mapping
theorem with initial data $u(T_\epsilon)$.  We obtain a solution until
time $2T_\epsilon$, if we apply our linear theory again (on the whole
interval $[0,2T_\epsilon]$ we see that
$\norm{u(2T_\epsilon)}_s\leq M_0/4.$ Then we may continue $k$ times as
long as $kT_\epsilon < T'$.

We thereby extend $u^\epsilon$ to a solution on
$[0,T_0]$ with $u^\epsilon \in X_{T_0,M_0}$ for any $\epsilon$.
Finally we come to the last result.

\begin{thm}
  There exists $u\in C([0,T^*];H^{s-1}(\R^n))\cap
  L^\infty([0,T^*];H^s(\R^n))$ such that
  $u^\epsilon\to u$ as $\epsilon\to 0$ in $C([0,T^*]; H^{s-1})$
\end{thm}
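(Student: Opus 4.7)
The plan is to establish that $\{u^\epsilon\}_{\epsilon>0}$ is Cauchy in $C([0,T^*];L^2(\R^n))$ for some $T^*\leq T_0$, and then to upgrade this to convergence in $C([0,T^*];H^{s-1})$ via interpolation against the uniform bound $\sup_\epsilon\norm{u^\epsilon}_{L^\infty_{T_0}H^s}\leq M_0$ obtained in the preceding step. The limit $u$ then inherits $u\in L^\infty([0,T^*];H^s)$ from weak-$*$ compactness of the uniform $H^s$ bound.

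For $\epsilon,\epsilon'>0$ set $w=u^\epsilon-u^{\epsilon'}$. Subtracting the two viscous equations and inserting $\pm\scr{L}(u^\epsilon)u^{\epsilon'}$ produces
\begin{equation*}
\del_t w = -\epsilon\Delta^2 w + \scr{L}(u^\epsilon)w + F,\qquad w(0)=0,
\end{equation*}
where $F=(\epsilon'-\epsilon)\Delta^2 u^{\epsilon'}+[\scr{L}(u^\epsilon)-\scr{L}(u^{\epsilon'})]u^{\epsilon'}$. Because $u^\epsilon(0)=u_0$ for every $\epsilon$, Lemma~\ref{2ndorderassumps} (together with the first-order verifications already used for the $J^{2m}u^\epsilon$ system) shows that the coefficients of $\scr{L}(u^\epsilon)$ satisfy \linref{LRegularity}--\linref{LNoTrap} uniformly in $\epsilon$, with constants bounded in terms of $M_0$ and the initial datum $u_0$. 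Theorem~\ref{aplin} applied to $w$ with forcing $F$ therefore yields
\begin{equation*}
\sup_{[0,T^*]}\norm{w(t)}_2^2 + \sup_{\mu\in\Z^n}\norm{J^{1/2}w}_{L^2(Q_\mu\times[0,T^*])}^2 \leq A\paren{\int_0^{T^*}\norm{F(t)}_2\,dt}^2.
\end{equation*}

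Since $s\geq 4$ and $\norm{\Delta^2 u^{\epsilon'}}_2\leq M_0$, the first piece of $F$ contributes at most $T^*|\epsilon-\epsilon'|M_0$. The nonlinear piece, expanded via the fundamental theorem of calculus in the $(u,\bar u,\grad u,\grad\bar u)$ arguments of each coefficient, is schematically a sum of $L^\infty$-bounded factors (since $s>n/2+2$ places the second derivatives of $u^{\epsilon'}$ in $L^\infty$) times $w$, $\bar w$, $\grad w$, $\grad\bar w$; in $L^2$ this is bounded by $C(M_0)\norm{w}_{H^1}$. Interpolating $\norm{w}_{H^1}\leq CM_0^{1/s}\norm{w}_2^{1-1/s}$ against the uniform $H^s$ bound produces a self-improving inequality of the form $Y\leq A_1 T^*|\epsilon-\epsilon'|+A_2 T^* Y^{1-1/s}$ for $Y=\sup_{[0,T^*]}\norm{w}_2$, which (after a bootstrap leveraging the local smoothing term on the left-hand side and a suitable restriction of $T^*$) forces $Y\to 0$ as $|\epsilon-\epsilon'|\to 0$. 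The standard interpolation
\begin{equation*}
\norm{w}_{H^{s-1}}\leq C\norm{w}_2^{1/s}\norm{w}_{H^s}^{(s-1)/s}\leq CM_0^{(s-1)/s}\norm{w}_2^{1/s}
\end{equation*}
then promotes Cauchyness to $C([0,T^*];H^{s-1})$, and the limit $u$ belongs to $L^\infty([0,T^*];H^s)$ by weak-$*$ compactness.

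The main technical obstacle lies in closing the $L^2$-estimate: because $a_{jk}$ depends on $\grad u$, the nonlinear difference $[\scr{L}(u^\epsilon)-\scr{L}(u^{\epsilon'})]u^{\epsilon'}$ unavoidably carries a $\grad w$ factor, and a naive $L^2$-Gronwall does not close uniformly in $|\epsilon-\epsilon'|$. The local smoothing term $\sup_\mu\norm{J^{1/2}w}_{L^2(Q_\mu\times[0,T^*])}$ supplied by Theorem~\ref{aplin} is precisely what is needed to absorb this first-order contribution, and carrying out this bootstrap (which dictates the final smallness of $T^*$) is the principal work.
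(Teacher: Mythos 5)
Your strategy diverges from the paper's at the crucial step, and the divergence creates a genuine gap. You place the entire nonlinear difference $[\scr{L}(u^\epsilon)-\scr{L}(u^{\epsilon'})]u^{\epsilon'}$ in the forcing $F$ and then try to close via the inequality $Y \leq A_1 T^* |\epsilon-\epsilon'| + A_2 T^* Y^{1-1/s}$ with $Y = \sup_{[0,T^*]}\norm{w}_2$. But this inequality does not force $Y\to 0$ as $|\epsilon-\epsilon'|\to 0$: letting $|\epsilon-\epsilon'|\to 0$ leaves $Y \leq A_2 T^* Y^{1-1/s}$, which only yields $Y \leq (A_2 T^*)^s$ — a bound independent of $|\epsilon - \epsilon'|$ and not zero. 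No choice of small $T^*$ fixes this, and there is no a priori smallness on $Y$ to run a continuity argument. You acknowledge that the local smoothing term on the left should absorb the $\grad w$ contribution, but your displayed estimate never actually performs that absorption: once the $\grad w$ terms are inside $\int\norm{F}_2\,dt$, the left-hand side's $\norm{J^{1/2}w}_{L^2(Q_\mu\times[0,T^*])}$ does not reach them. (Also note the local smoothing term is a half-derivative locally, not a full $H^1$ control, so even a naive absorption in the form you propose would not balance degrees.)

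The paper closes the estimate differently: after expanding $(\scr{L}(u^\epsilon)-\scr{L}(u^{\epsilon'}))u^\epsilon$ by telescoping/fundamental-theorem in each of the $2n+2$ slots, the resulting terms linear in $v$, $\bar v$, $\grad v$, $\grad\bar v$ are \emph{moved into the linear operator}, not left as forcing. Concretely, $v$ satisfies a linear equation of the form covered by Theorem~\ref{aplin}, with first-order coefficients built from $\del_{z_l}a_{jk}$, $\del_{z_l}b_{m,j}$ evaluated at intermediate arguments; the hypotheses \linref{NLFirstOrder} were designed exactly so these new coefficients satisfy \linref{LFirstOrder} (see the paper's verification). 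The only genuine forcing that remains is $(\epsilon-\epsilon')\Delta^2 u^\epsilon$, and the a priori estimate then gives $\sup_{[0,T^*]}\norm{v}_2 \leq C|\epsilon-\epsilon'|T_0 M_0$ directly — linear in $|\epsilon-\epsilon'|$, with no bootstrap. The local smoothing is used, but \emph{inside} the proof of Theorem~\ref{aplin} to control the first-order operator terms, not by absorbing an $H^1$ forcing on the outside. If you rewrite your argument to put the terms linear in $w$ into $\scr{L}$ and verify \linref{LFirstOrder} for the new coefficients, your proof will match the paper's and the rest of your outline (weak-$*$ compactness for $L^\infty H^s$, interpolation $\norm{w}_{H^{s-1}} \leq \norm{w}_2^{1/s}\norm{w}_{H^s}^{(s-1)/s}$) is correct.
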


\begin{proof}
  Let $\epsilon, \epsilon'\in (0,1)$ with $\epsilon'<\epsilon$.  Let
  $v=u^\epsilon-u^{\epsilon'}$.  Then $v$ satisfies
  \begin{equation*}
    \left\{\begin{aligned}
      & \del_t v=-(\epsilon-\epsilon')\Delta^2 u^\epsilon - \epsilon'\Delta^2 v
      +\scr{L}(u^{\epsilon'})v
      +\paren{\scr{L}(u^\epsilon)-\scr{L}(u^{\epsilon'})}u^\epsilon \\
      & v(0,x)=0
     \end{aligned}\right.
  \end{equation*}

Now we rewrite
$(\scr{L}(u^{\epsilon})-\scr{L}(u^{\epsilon'}))u^{\epsilon}$ we
proceed term by term

\[
\begin{aligned}
  &ia_{jk}(x,t,u^\epsilon,\bar u^\epsilon,\grad u^\epsilon, \grad\bar u^\epsilon)-
  ia_{jk}(x,t,u^{\epsilon'},\bar u^{\epsilon'},\grad u^{\epsilon'}, \grad\bar u^{\epsilon'})\\
  =&\frac{ia_{jk}(x,t,u^\epsilon,\bar u^\epsilon,\grad u^\epsilon, \grad\bar u^\epsilon) -
    ia_{jk}(x,t,u^{\epsilon'},\bar u^\epsilon,\grad u^\epsilon, \grad\bar u^\epsilon)}{u^\epsilon - u^{\epsilon'}}\del^2_{jk}u^\epsilon v \\
  +&\frac{ia_{jk}(x,t,u^{\epsilon'},\bar u^\epsilon,\grad u^\epsilon, \grad\bar u^\epsilon) -
   ia_{jk}(x,t,u^{\epsilon'},\bar u^{\epsilon'},\grad u^\epsilon, \grad\bar u^\epsilon)}{\bar u^\epsilon - \bar u^{\epsilon'}}\del^2_{jk}u^\epsilon \bar v \\
+&\frac{ia_{jk}(x,t,u^{\epsilon'},\bar u^{\epsilon'},\grad u^\epsilon, \grad\bar u^\epsilon) -
   ia_{jk}(x,t,u^{\epsilon'},\bar u^{\epsilon'},\grad u^{\epsilon'}, \grad\bar u^\epsilon)}{\del_l u^\epsilon - \del_l u^{\epsilon'}}\del^2_{jk}u^\epsilon \del_l v \\
+& \frac{ia_{jk}(x,t,u^{\epsilon'},\bar u^{\epsilon'},\grad u^{\epsilon'}, \grad\bar u^\epsilon) -
   ia_{jk}(x,t,u^{\epsilon'},\bar u^{\epsilon'},\grad u^{\epsilon'}, \grad\bar u^{\epsilon'})}{\del_l \bar u^\epsilon - \del_l \bar u^{\epsilon'}}\del^2_{jk}u^\epsilon \del_l \bar v \\
\end{aligned}
\]

So we get zeroth and first order terms in $v$.
The first order terms have coefficients
$\del_{z_k}a_{jk}(x,t,u^{\epsilon'},\bar u^{\epsilon'}, \cdot, \grad \bar
u^\epsilon)$ and $\del_{z_k}a_{jk}(x,t,u^{\epsilon'},\bar u^{\epsilon'}, \grad \bar
u^{\epsilon'},\cdot)$.  By \nlinref{NLFirstOrder} we assumed the
necessary decomposition of $\del_{z_l} a_{jk}$ and
$\del_t\del_{z_l}a_{jk}$ so that these terms satisfy
\linref{LFirstOrder}.

We apply the same idea to the $b_l$, $l=1,2$ and also get zeroth and
first order terms in $v$.  To see that our first order terms still
satisfy the required estimate we remark that the conclusion of Lemma
4.4 still holds for $\del_{z_k}b_{1,j}(x,t,u^{\epsilon'},\bar
u^{\epsilon'}, \cdot, \grad\bar u^{\epsilon})\del_ju^\epsilon$.  Indeed when we estimate the $L^1$ norm will still have the
product of two elements of $H^s$ whose norm is controlled by $M_0.$
Similarly with the first order terms.

Thus we arrive at a system whose coefficients satisfy the conditions for our
linear estimates.

Applying our linear estimates we conclude that

\[\sup_{[0,T^*]}\norm{v}_2\leq
C(\epsilon-\epsilon')\int_0^{T^*}\norm{\Delta^2 u^\epsilon}_2\,dt\leq C(\epsilon-\epsilon')T_0M_0.\]

Hence as $\epsilon-\epsilon'\to 0$ we have
$u^{\epsilon}-u^{\epsilon'}\to 0$ in $C([0,T^*];L^2).$ So there is a
$u\in C([0,T^*];L^2)$ such that $u^{\epsilon}\to u.$  Since
$u^\epsilon\in L^\infty([0,T^*];H^s)$ and $L^\infty([0,T^*];H^s)$ is the
dual of $L^1([0,T^*]; H^{-s})$ we know there is a subsequence that has
a limit in $L^\infty([0,T^*];H^s).$  But by our first estimate this
could only be $u.$

To see that $u\in C([0,T^*];H^{s-1})$ we simply notice that
\[\norm{u(t)-u(t')}_{H^{s-1}}\leq
\norm{u(t)-u(t')}_2^{1/s}\norm{u(t)-u(t')}_{H^s}^{(s-1)/s}.\]  The
first term in the right hand side tends to 0 and the second is
bounded.  Hence $u\in C([0,T^*];H^{s-1})$.

To see that $u$ is unique we reapply the last argument with
$\epsilon=\epsilon'=0$.  We will end up with
\[\sup_{[0,T^*]}\norm{v}_2=0.\] and therefore $u$ is unique.
\end{proof}

%%% Local Variables:
%%% mode: latex
%%% TeX-master: "qls"
%%% End:

\begin{bibdiv}
\begin{biblist}
\bibselect{qlsbib}
\end{biblist}
\end{bibdiv}

%%% Local Variables: 
%%% mode: latex
%%% TeX-master: "qls"
%%% End: 

\end{document}